\newtheorem{theorem}{Theorem}[section]
\newtheorem{lem}[theorem]{Lemma}
\newtheorem{thm}[theorem]{Theorem}
\newtheorem{cor}[theorem]{Corollary}
\newtheorem{ex}[theorem]{Example}
\newtheorem{defn}[theorem]{Definition}
\newtheorem{rem}[theorem]{Remark}
\newcommand{\Hom}{{\rm Hom}}
\newcommand{\End}{{\rm End}}
\newcommand{\Gen}{{\rm Gen}}
\begin{document}

\title[Correspondences of coclosed submodules]{Correspondences of coclosed submodules}

\author[S. Crivei]{Septimiu Crivei}

\address{Faculty of Mathematics and Computer Science,``Babe\c s-Bolyai" University, Str. Mihail Kog\u alni-ceanu 1,
400084 Cluj-Napoca, Romania} \email{crivei@math.ubbcluj.ro}

\author[H. Inank\i l]{Hatice Inank\i l}

\address{Department of Mathematics,  Gebze Institute of Technology, \c{C}ayirova  Campus, 41400 Gebze-Kocaeli, 
Turkey} \email{hinankil@gyte.edu.tr}

\author[M.T. Ko\c san]{M. Tamer Ko\c san}

\address{Department of Mathematics,  Gebze Institute of Technology, \c{C}ayirova  Campus, 41400 Gebze-Kocaeli, 
Turkey} \email{mtkosan@gyte.edu.tr}

\author[G. Olteanu]{Gabriela Olteanu}

\address{Department of Statistics-Forecasts-Mathematics, ``Babe\c s-Bolyai'' University,
Str. T. Mihali 58-60, 400591 Cluj-Napoca, Romania}
\email{gabriela.olteanu@econ.ubbcluj.ro}

\subjclass[2000]{16D10, 06A15} \keywords{Galois connection, lattice, coclosed submodule, quasi-projective module,
lifting module.}

\begin{abstract} We establish an order-preserving bijective correspondence between the sets of coclosed elements of some
bounded lattices related by suitable Galois connections. As an application, we deduce that if $M$ is a finitely
generated quasi-projective left $R$-module with $S=\End_R(M)$ and $N$ is an $M$-generated left $R$-module, then there
exists an order-preserving bijective correspondence between the sets of coclosed left $R$-submodules of $N$ and coclosed
left $S$-submodules of $\Hom_R(M,N)$. 
\end{abstract}

\thanks{The first author acknowledges the support of the grant PN-II-RU-TE-2011-3-0065. The fourth 
author acknowledges the support of the grant PN-II-RU-TE-2009-1 project ID\_303. Part
of the paper was carried out when the first author was visiting Gebze Institute of Technology in November 2010. He
gratefully acknowledges the support of TUBITAK and the kind hospitality of the host university.}


\maketitle

\section{Introduction}

An important general problem in module theory is to relate properties of a module with properties of its endomorphism
ring; or more generally, for a left $R$-module $M$ with $S=\End_R(M)$, to relate properties of a left $R$-module $N$
with properties of the left $S$-module $\Hom_R(M,N)$. Within the vast literature dealing with this problem, we point out
the work of J. Zelmanowitz \cite{Z}, which is closely related to our topic and motivates our study. He showed that if
$M$ is a left $R$-module with $S=\End_R(M)$ and $N$ is an $M$-faithful left $R$-module, then there exists an
order-preserving bijective correspondence between the sets of closed left $R$-submodules of $N$ and closed left
$S$-submodules of $\Hom_R(M,N)$ \cite[Theorem~1.2]{Z}.  

The aim of the present paper is to establish a result dual to that of J. Zelmanowitz, in terms of coclosed submodules.
While closed submodules coincide with complement submodules and every module has a complement, in general coclosed
submodules are different of supplement submodules and not every submodule has a supplement (see \cite{CLVW}). These are
some of the main obstacles in dualizing results on topics related to coclosed submodules; for instance compare the
theories of lifting modules \cite{CLVW} and extending modules \cite{DHSW}. In order to overcome these problems, we need
to find a suitable setting for our results and to use a more general approach. We shall first consider the context of
bounded lattices and we shall make use of Galois connections between them. The reason for doing
that is twofold: first, the notions involved are lattice-theoretic, and secondly, our approach clarifies the
exposition and gives a more natural explanation for certain conditions we have to impose on our modules. Our main
theorem shows that if $(A,\wedge,\vee,0,1)$ and $(B,\wedge,\vee,0,1)$ are two bounded lattices, and 
$(\alpha,\beta)$ is a special Galois connection (whose properties will be detailed later on), then $\alpha$ and
$\beta$ induce mutually inverse bijections between the set of coclosed elements $a\in A$ such that $\alpha(a)$ is
coclosed in $B$ and the set of coclosed elements $b\in B$ such that $\beta(b)$ has a unique coclosure in $A$. In
particular, under certain conditions, we obtain order-preserving mutually inverse bijections between the sets of
coclosed elements in $A$ and coclosed elements in $B$, which dualize and generalize the main theorem of J.
Zelmanowitz \cite[Theorem~1.2]{Z}. We apply our result to a particular Galois connection for modules, previously pointed
out by T. Albu and C. N\u ast\u asescu \cite{AN}, in order to deduce the following consequence: if $M$ is a finitely
generated quasi-projective left $R$-module with $S=\End_R(M)$ and $N$ is an $M$-generated left $R$-module, then there
exists an order-preserving bijective correspondence between the sets of coclosed left $R$-submodules of $N$ and coclosed
left $S$-submodules of $\Hom_R(M,N)$. We also relate the dual Goldie dimensions as well as the supplemented and the
lifting properties of the left $R$-module $N$ and the left $S$-module $\Hom_R(M,N)$.

\section{Cosmall Galois connections}

We shall make use of the concept of (monotone) Galois connection (e.g., see \cite{Erne}). This is defined on arbitrary
partially ordered sets, but we recall its definition on lattices, this being the setting in which we shall employ it. 

\begin{defn} \rm Let $(A,\leq)$ and $(B,\leq)$ be lattices. A {\it Galois connection} between them consists of a
pair $(\alpha, \beta)$ of two order-preserving functions $\alpha:A\to B$ and $\beta:B\to A$ such that for all $a\in
A$ and $b\in B$, we have $\alpha(a)\leq b$ if and only if $a\leq \beta(b)$. Equivalently, $(\alpha,\beta)$ is a Galois
connection if and only if for all $a\in A$, $a\leq \beta\alpha(a)$ and for all $b\in B$, $\alpha\beta(b)\leq b$. 

An element $a\in A$ (respectively $b\in B$) is called a \emph{Galois element} if $\beta\alpha(a)=a$ (respectively
$\alpha\beta(b)=b$). 
\end{defn}

As usual, one may view any lattice $(A,\leq)$ as a triple $(A,\wedge,\vee)$, where $\wedge$ and $\vee$ denote the
infimum and the supremum of elements in $A$. Recall that the lattice $A$ is bounded if it has a least element,
denoted by $0$, and a greatest element, denoted by $1$. If $A$ is bounded, then we tacitly assume that $0\neq 1$, and we
denote it as $(A,\wedge,\vee,0,1)$. For $a,a'\in A$, we also denote $[a,a']=\{x\in A\mid a\leq x\leq a'\}$.

We gather in the following lemma some well-known results (e.g., see \cite[Proposition~3.3]{AN}, \cite{Erne}), which
shall be used throughout the paper without further reference. 

\begin{lem} \label{l:galois} Let $(A,\leq)$ and $(B,\leq)$ be lattices, and $(\alpha,\beta)$ a Galois
connection, where $\alpha:A\to B$ and $\beta:B\to A$. Then: 

(i) $\alpha\beta\alpha=\alpha$ and $\beta\alpha\beta=\beta$.

(ii) $\alpha$ preserves all suprema in $A$ and $\beta$ preserves all infima in $B$. 

(iii) If $A$ and $B$ are bounded, then $\alpha(0)=0$ and $\beta(1)=1$. 

(iv) The restrictions of $\alpha$ and $\beta$ to the corresponding sets of Galois elements are mutually inverse
bijections.
\end{lem}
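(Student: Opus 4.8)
The plan is to prove the four parts in order, since each is a short consequence of the single defining property that $\alpha(a)\le b$ holds if and only if $a\le\beta(b)$, or equivalently of the two inequalities $a\le\beta\alpha(a)$ (for $a\in A$) and $\alpha\beta(b)\le b$ (for $b\in B$) recorded in the definition.

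For (i), I would fix $a\in A$, start from $a\le\beta\alpha(a)$, and apply the order-preserving map $\alpha$ to obtain $\alpha(a)\le\alpha\beta\alpha(a)$; the reverse inequality $\alpha\beta\alpha(a)\le\alpha(a)$ is the second defining inequality evaluated at $b=\alpha(a)$. Antisymmetry then yields $\alpha\beta\alpha=\alpha$, and $\beta\alpha\beta=\beta$ follows by the symmetric argument, fixing $b\in B$ and starting from $\alpha\beta(b)\le b$.

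For (ii), suppose a family $\{a_i\}_{i\in I}$ in $A$ admits a supremum $a=\bigvee_{i}a_i$. Since $\alpha$ is order-preserving, $\alpha(a)$ is an upper bound of $\{\alpha(a_i)\}_{i\in I}$. If $b\in B$ is any upper bound of this family, then $\alpha(a_i)\le b$ gives $a_i\le\beta(b)$ for every $i$, so $a\le\beta(b)$, and hence $\alpha(a)\le b$; thus $\alpha(a)$ is the least upper bound, i.e.\ $\alpha(\bigvee_i a_i)=\bigvee_i\alpha(a_i)$. The assertion for $\beta$ and infima is dual. Part (iii) is then the instance of (ii) for the empty family, since $0$ is the supremum of the empty subset of $A$ and $1$ is the infimum of the empty subset of $B$; alternatively, $\alpha(0)\le b$ for every $b\in B$ because $0\le\beta(b)$ always holds, whence $\alpha(0)=0$, and dually $\beta(1)=1$.

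For (iv), the content of (i) is precisely that $\alpha$ carries Galois elements of $A$ to Galois elements of $B$: if $\beta\alpha(a)=a$ then $\alpha\beta(\alpha(a))=\alpha\beta\alpha(a)=\alpha(a)$. Symmetrically $\beta$ carries Galois elements of $B$ to Galois elements of $A$. On the set of Galois elements of $A$ the composite $\beta\circ\alpha$ is the identity by definition, and on the set of Galois elements of $B$ the composite $\alpha\circ\beta$ is the identity, so the two restrictions are mutually inverse bijections. Every step here is a one-line manipulation of the two defining inequalities together with antisymmetry, so I expect no genuine obstacle; the only point meriting a word of care is the reading of (ii): the statement that $\alpha$ preserves all suprema is to be understood as saying that $\alpha$ preserves every supremum that exists in $A$ — in particular all finite suprema and the empty one — which is exactly the form in which the lemma is applied later in the paper.
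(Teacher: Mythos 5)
Your proof is correct in all four parts and is the standard argument; the paper itself gives no proof of this lemma, simply recording it as well known with references to Albu--N\u ast\u asescu and Ern\'e et al., so there is nothing to compare against. Each of your steps (antisymmetry from the two unit/counit inequalities for (i), the least-upper-bound verification for (ii), the empty-family or direct argument for (iii), and the restriction argument via (i) for (iv)) is sound, and your reading of (ii) as ``preserves every supremum that exists'' matches how the lemma is used later in the paper.
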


The module-theoretic concepts of direct summand, cosmall inclusion, coclosed submodule and coclosure of a submodule
(e.g., see \cite{CLVW}) have natural lattice counterparts.  

\begin{defn} \rm Let $(A,\wedge,\vee,0,1)$ be a bounded lattice. 

(1) An element $a\in A$ is called a {\it complement} in $A$ if there exists $a'\in A$ such that $a\wedge a'=0$ and
$a\vee a'=1$.

(2) Let $a,a'\in A$ be such that $a\leq a'$. Then $a'$ is called {\it cosmall} in $[a,1]$ if for any $x\in A$,
$1=a'\vee x$ implies $1=a\vee x$. 

(3) An element $a'\in A$ is called {\it coclosed} in $A$ if for any $a\in A$, $a'$ cosmall in $[a,1]$ implies $a=a'$.

(4) An element $a'\in A$ is called a {\it coclosure} of $a\in A$ in $A$ if $a$ is cosmall in $[a',1]$ and $a'$ is
coclosed
in $A$.
\end{defn}

The following lemma is well-known for submodule lattices, and its proof is straightforward.

\begin{lem} \label{l:cosmall} Let $(A,\wedge,\vee,0,1)$ be a bounded lattice. 

(i) Let $a,b,c\in A$ such that $a\leq b\leq c$. Then $c$ is cosmall in $[a,1]$ if and only if $b$ is
cosmall in $[a,1]$ and $c$ is cosmall in $[b,1]$.

(ii) If $A$ is modular, then every complement in $A$ is coclosed.
\end{lem}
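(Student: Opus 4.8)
The plan is to argue both parts directly from the definition of cosmall inclusion, using only the lattice order for part (i) and one application of the modular law for part (ii); no further machinery is needed.

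For part (i) I would recall that $y$ is cosmall in $[x,1]$ precisely when every $z\in A$ with $y\vee z=1$ already satisfies $x\vee z=1$, and then verify the two implications separately. For the forward direction, assume $c$ is cosmall in $[a,1]$. To obtain that $b$ is cosmall in $[a,1]$, I would take $x\in A$ with $b\vee x=1$; since $b\leq c$ this forces $c\vee x=1$, hence $a\vee x=1$ by hypothesis. To obtain that $c$ is cosmall in $[b,1]$, I would take $x\in A$ with $c\vee x=1$, deduce $a\vee x=1$, and then note that $a\leq b$ forces $b\vee x=1$. For the converse, assuming $b$ cosmall in $[a,1]$ and $c$ cosmall in $[b,1]$, I would simply chain the two: $c\vee x=1$ implies $b\vee x=1$, which implies $a\vee x=1$, so $c$ is cosmall in $[a,1]$.

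For part (ii), let $c$ be a complement in $A$, say $c\wedge c'=0$ and $c\vee c'=1$ for some $c'\in A$. To test coclosedness, take $a\in A$ with $a\leq c$ and $c$ cosmall in $[a,1]$. From $c\vee c'=1$, cosmallness yields $a\vee c'=1$, and then the modular law (applicable precisely because $a\leq c$) gives
\[
c=c\wedge 1=c\wedge(a\vee c')=a\vee(c\wedge c')=a\vee 0=a,
\]
so $a=c$ and hence $c$ is coclosed.

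I do not anticipate any genuine obstacle here: part (i) is a bookkeeping argument about joins and the order relation, and part (ii) reduces to the single modular identity displayed above. The only point requiring care is invoking the modular law in the correct form, since it needs exactly the inclusion $a\leq c$ — and that is precisely the hypothesis available when one probes whether $c$ is coclosed.
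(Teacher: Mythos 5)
Your proof is correct: both the chaining argument for (i) and the single application of the modular law $c\wedge(a\vee c')=a\vee(c\wedge c')$ for (ii) are exactly the straightforward verification the paper has in mind (the paper omits the proof, calling it straightforward). No gaps.
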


Note that a coclosure of a given element might not exist. For instance, the subgroup $2\mathbb{Z}$ of the abelian group
$\mathbb{Z}$ has no coclosure in the subgroup lattice of $\mathbb{Z}$ (\cite[3.10]{CLVW}). 

We continue with an easy, but useful lemma. 

\begin{lem} \label{l:cocgal} Let $(A,\wedge,\vee,0,1)$ and $(B,\wedge,\vee,0,1)$ be bounded
lattices, and $(\alpha,\beta)$ a Galois connection, where $\alpha:A\to B$, $\alpha(1)=1$ and $\beta:B\to A$
preserves finite suprema. Then: 

(i) For all $b\in B$, $b$ is cosmall in $[\alpha\beta(b),1]$.

(ii) Every coclosed element of $B$ is Galois.  
\end{lem}

\begin{proof} (i) Let $b\in B$ and let $1=b\vee b'$ for some $b'\in B$. Then $1=\alpha\beta(1)=\alpha\beta(b)\vee
\alpha\beta(b')\leq \alpha\beta(b)\vee b'$, and so $1=\alpha\beta(b)\vee b'$. Hence $b$ is cosmall in
$[\alpha\beta(b),1]$. 

(ii) Clear by (i).
\end{proof}

Note that if $(\alpha,\beta)$ is a Galois connection between two bounded lattices and $\alpha,\beta$ are lattice
homomorphisms, then $\alpha(1)=1$ and $\beta$ preserves finite suprema.

We introduce two special types of Galois connection, which will be useful to us.

\begin{defn} \rm Let $(A,\wedge,\vee,0,1)$ and $(B,\wedge,\vee,0,1)$ be bounded lattices, and $(\alpha,\beta)$
a Galois connection, where $\alpha:A\to B$ and $\beta:B\to A$. We say that $(\alpha,\beta)$ is {\it cosmall} if for
all $a\in A$, $\beta\alpha(a)$ is cosmall in $[a,1]$. We say that a cosmall Galois connection $(\alpha,\beta)$ is {\it
UCC} if for every coclosed element $a\in A$, $a$ is the unique coclosure of $\beta\alpha(a)$ in $A$.
\end{defn}

We present some examples to illustrate the above theory. 

\begin{ex} \rm \label{e:ex} Consider the abelian groups $G=\mathbb{Z}_p\times \mathbb{Z}_{q^2}$ for some primes $p$ and
$q$ with $p\neq q$, and $G'=\mathbb{Z}_2\times \mathbb{Z}_4$, where $\mathbb{Z}_n$ denotes the cyclic group of order
$n\in \mathbb{N}$. Their subgroup lattices $S(G)$ and $S(G')$ have the following forms respectively: 
\begin{small} 
\[\SelectTips{cm}{}
\xymatrix{
 & G \ar@{-}[dl] \ar@{-}[ddr] & &&&&& \\ 
H_4 \ar@{-}[dd] \ar@{-}[dddrr] & &  & &&& G' \ar@{-}[dl] \ar@{-}[d] \ar@{-}[dr] &\\
 & & H_3 \ar@{-}[dd] &&& H_4' \ar@{-}[dl] \ar@{-}[d] \ar@{-}[dr] & H_5' \ar@{-}[d] & H_6' \ar@{-}[dl]\\
H_2 \ar@{-}[ddr] & &&& H_1' \ar@{-}[dr] & H_2' \ar@{-}[d] & H_3' \ar@{-}[dl] \\
 & & H_1 \ar@{-}[dl] &&& 0' & &    \\
& 0 &  &&&&  
}\]
\end{small} 

It is easy to check that $H_1$ is cosmall in $[0,G]$, $H_4$ is cosmall in $[H_2,G]$, $H_1$ and $H_4$ are the only
subgroups of $G$ which are not coclosed, $0$ is a coclosure of $H_1$ in $G$, and $H_2$ is a coclosure of $H_4$ in $G$.
Also, $H_3'$ is cosmall in $[0',G']$, $H_4'$ is cosmall in $[H_1',G']$ and $[H_2',G']$, $H_3'$ and $H_4'$ are the
only subgroups of $G'$ which are not coclosed, $0'$ is a coclosure of $H_3'$ in $G'$, and $H_1',H_2'$ are coclosures of
$H_4'$ in $G'$. For properties of subgroups with unique (co)closure and cosmall subgroups of abelian groups the
reader is referred to \cite{CO} and \cite{CS}.

(1) Consider the functions $\alpha:S(G)\to S(G)$ defined by $\alpha(0)=\alpha(H_1)=0$, $\alpha(H_2)=\alpha(H_4)=H_3$,
$\alpha(H_3)=H_2$ and $\alpha(G)=G$, $\beta:S(G)\to S(G)$ defined by $\beta(0)=\beta(H_1)=H_1$,
$\beta(H_2)=\beta(H_4)=H_3$, $\beta(H_3)=H_4$, $\beta(G)=G$. Then $(\alpha,\beta)$ is a Galois connection from the
lattice $(S(G),\subseteq)$ to itself. For every $H\in S(G)\setminus \{H_1,H_4\}$ we have $\alpha\beta(H)=H$. Hence $0$, 
$H_2$, $H_3$ and $G$ are Galois elements in the codomain $B=S(G)$ of $\alpha$. Also, $H_1$ is cosmall in
$[\alpha\beta(H_1),G]=[0,G]$ and $H_4$ is cosmall in $[\alpha\beta(H_4),G]=[H_2,G]$. On the other hand, for every $H\in
S(G)\setminus \{0,H_2\}$ we have $\beta\alpha(H)=H$. Hence $H_1$, $H_3$, $H_4$ and $G$ are Galois elements in
the domain $A=S(G)$ of $\alpha$. Also, $\beta\alpha(0)=H_1$ is cosmall in $[0,G]$ and $\beta\alpha(H_2)=H_4$ is cosmall
in $[H_2,G]$. Moreover, for every coclosed $H\in S(G)$, $H$ is the unique coclosure of $\beta\alpha(H)$ in $S(G)$. Hence
$(\alpha,\beta)$ is a UCC cosmall Galois connection. Note that $H_2$ is a coclosed element, but not a Galois element in
the domain $A=S(G)$ of $\alpha$. Hence not every coclosed element of $A$ is Galois under the hypotheses of Lemma
\ref{l:cocgal}.

(2) Consider the functions $\alpha:S(G)\to S(G)$ defined by $\alpha(H_2)=H_4$,
$\alpha(H_3)=G$, and $\alpha(H)=H$ for every $H\in S(G)\setminus \{H_2,H_3\}$, and $\beta:S(G)\to S(G)$ defined by
$\beta(H_2)=0$, $\beta(H_3)=H_1$ and $\beta(H)=H$ for every $H\in S(G)\setminus \{H_2,H_3\}$. Then $(\alpha,\beta)$ is
a Galois connection from the lattice $(S(G),\subseteq)$ to itself. But $(\alpha,\beta)$ is not a cosmall Galois
connection, because for instance $\beta\alpha(H_3)=G$ is not cosmall in $[H_3,G]$. Using the same setting, let
us also show that the hypothesis on $\beta$ to preserve finite suprema in Lemma \ref{l:cocgal} cannot be removed. Note
that we have $\beta(H_3+H_4)=G\neq H_4=\beta(H_3)+\beta(H_4)$. Then $H_2$ is not cosmall in
$[\alpha\beta(H_2),G]=[0,G]$. Also, $H_4$ is a Galois element in the codomain $B=S(G)$ of $\alpha$, but not a coclosed
element. 

(3) Consider the functions $\alpha':S(G')\to S(G')$ defined by $\alpha'(H_3')=0$, $\alpha(H_4')=H_1'$ and
$\alpha(H')=H'$ for every $H'\in S(G')\setminus \{H_3',H_4'\}$, and $\beta':S(G')\to S(G')$ defined by
$\beta'(0')=H_3'$, $\beta'(H_1')=H_4'$ and $\beta(H')=H'$ for every $H'\in S(G')\setminus \{0',H_1'\}$. Then
$(\alpha',\beta')$ is a Galois connection from the lattice $(S(G'),\subseteq)$ to itself. For every $H'\in
S(G')\setminus \{H_3',H_4'\}$ we have $\alpha\beta(H')=H'$. Also, $H_3'$ is cosmall in
$[\alpha'\beta'(H_3'),G']=[0',G']$ and $H_4'$ is cosmall in $[\alpha'\beta'(H_4'),G']=[H_1',G']$. On the other hand, for
every $H'\in S(G')\setminus \{0',H_1'\}$ we have $\beta'\alpha'(H')=H'$. Also, $\beta'\alpha'(0')=H_3'$ is cosmall in
$[0',G']$ and $\beta'\alpha'(H_1')=H_4'$ is cosmall in $[H_1',G']$. Hence $(\alpha',\beta')$ is a cosmall Galois
connection. But it is not UCC, because for the coclosed subgroup $H_1'$, $\beta'\alpha'(H_1')$ has two coclosures in
$G'$, namely $H_1'$ and $H_2'$.  
\end{ex}

We end this section with some results which show that the cosmall property and the dual Goldie dimension may be
transferred through cosmall Galois connections.

\begin{lem} \label{l:transfer} Let $(A,\wedge,\vee,0,1)$ and $(B,\wedge,\vee,0,1)$ be bounded lattices, and  
$(\alpha,\beta)$ a cosmall Galois connection, where $\alpha:A\to B$, $\alpha(1)=1$ and $\beta:B\to A$ preserves
finite suprema. 

(i) Let $a,a'\in A$. Then $a'$ is cosmall in $[a,1]$ if and only if $\alpha(a')$ is cosmall in $[\alpha(a),1]$.

(ii) Let $b,b'\in B$. Then $b'$ is cosmall in $[b,1]$ if and only if $\beta(b')$ is cosmall in $[\beta(b),1]$.
\end{lem}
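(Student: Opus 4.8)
The plan is to prove each of the two equivalences by a direct chase of the defining inequalities of the Galois connection, combined with the cosmall hypothesis on $(\alpha,\beta)$. Throughout I will use, without further comment, that $\alpha$ preserves arbitrary suprema (Lemma \ref{l:galois}(ii)), that $\beta$ preserves finite suprema and $\beta(1)=1$ (hypothesis together with Lemma \ref{l:galois}(iii)), that $\alpha(1)=1$, that $a\leq\beta\alpha(a)$ and $\alpha\beta(b)\leq b$ for all $a,b$, and that $\beta\alpha(c)$ is cosmall in $[c,1]$ for every $c\in A$. I will also note at the outset that writing ``$a'$ cosmall in $[a,1]$'' presupposes $a\leq a'$, hence $\alpha(a)\leq\alpha(a')$ (respectively $\beta(b)\leq\beta(b')$), so the intervals appearing in the statement make sense.

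For (i), suppose first that $a'$ is cosmall in $[a,1]$ and take $x\in B$ with $\alpha(a')\vee x=1$. Applying $\beta$ gives $\beta\alpha(a')\vee\beta(x)=1$; since $\beta\alpha(a')$ is cosmall in $[a',1]$, this yields $a'\vee\beta(x)=1$, and then cosmallness of $a'$ in $[a,1]$ gives $a\vee\beta(x)=1$. Applying $\alpha$ and using $\alpha\beta(x)\leq x$ gives $\alpha(a)\vee x=1$, as required. Conversely, suppose $\alpha(a')$ is cosmall in $[\alpha(a),1]$ and take $x\in A$ with $a'\vee x=1$. Applying $\alpha$ gives $\alpha(a')\vee\alpha(x)=1$, hence $\alpha(a)\vee\alpha(x)=\alpha(a\vee x)=1$, and applying $\beta$ gives $\beta\alpha(a\vee x)=\beta(1)=1$. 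Since $\beta\alpha(a\vee x)$ is cosmall in $[a\vee x,1]$, instantiating the defining condition at $0$ yields $a\vee x=1$.

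Part (ii) is symmetric, the roles of the two adjunction inequalities being interchanged. Assuming $b'$ cosmall in $[b,1]$ and taking $x\in A$ with $\beta(b')\vee x=1$, I would apply $\alpha$ and use $\alpha\beta(b')\leq b'$ to obtain $b'\vee\alpha(x)=1$, then cosmallness of $b'$ to get $b\vee\alpha(x)=1$; applying $\beta$ gives $\beta(b)\vee\beta\alpha(x)=1$, and cosmallness of $\beta\alpha(x)$ in $[x,1]$ then yields $\beta(b)\vee x=1$. For the converse, from $b'\vee x=1$ apply $\beta$ to get $\beta(b')\vee\beta(x)=1$, use cosmallness of $\beta(b')$ in $[\beta(b),1]$ to get $\beta(b)\vee\beta(x)=\beta(b\vee x)=1$, and finally apply $\alpha$ and use $\alpha\beta(b\vee x)\leq b\vee x$ to conclude $b\vee x=1$.

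The computations are entirely routine; the only point requiring care is that the cosmall property of $(\alpha,\beta)$ is one-directional — it asserts that $\beta\alpha(c)$ is cosmall in $[c,1]$, and not the reverse containment — so one must invoke it in the correct direction in each half of each equivalence, and one must also keep track of which of $\alpha,\beta$ preserves arbitrary versus merely finite suprema. No use is made of the UCC hypothesis.
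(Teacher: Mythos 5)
Your proof is correct and follows essentially the same chase as the paper's: apply $\alpha$ or $\beta$ to a join equal to $1$, use the adjunction inequalities $\alpha\beta(b)\leq b$ and the cosmallness of $\beta\alpha(c)$ in $[c,1]$ in the appropriate direction. The only cosmetic difference is in the converse of (i), where you invoke cosmallness once at $a\vee x$ (instantiated at $0$) instead of the paper's two separate applications at $a$ and $a''$; this is an equivalent bookkeeping of the same argument.
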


\begin{proof} (i) Assume that $a'$ is cosmall in $[a,1]$. Let $1=\alpha(a')\vee b'$ for some $b'\in B$. Then
$1=\beta(1)=\beta\alpha(a')\vee \beta(b')$. Since $(\alpha,\beta)$ is a cosmall Galois connection, it follows that
$1=a'\vee \beta(b')$. Now by hypothesis, we must have $1=a\vee \beta(b')$. Then $1=\alpha(1)=\alpha(a)\vee
\alpha\beta(b')\leq \alpha(a)\vee b'$, and so $1=\alpha(a)\vee b'$. Hence $\alpha(a')$ is cosmall in $[\alpha(a),1]$.

Conversely, assume that $\alpha(a')$ is cosmall in $[\alpha(a),1]$. Let $1=a'\vee a''$ for some $a''\in A$. Then
$1=\alpha(1)=\alpha(a')\vee \alpha(a'')$, whence $1=\alpha(a)\vee \alpha(a'')$ by hypothesis. Then
$1=\beta(1)=\beta\alpha(a)\vee \beta\alpha(a'')$. Since $(\alpha,\beta)$ is a cosmall Galois connection, it follows
that $1=a\vee a''$. Hence $a'$ is cosmall in $[a,1]$. 

(ii) Assume that $b'$ is cosmall in $[b,1]$. Let $1=\beta(b')\vee a'$ for some $a'\in A$. Then
$1=\alpha(1)=\alpha\beta(b')\vee \alpha(a')\leq b'\vee \alpha(a')$, hence $1=b'\vee\alpha(a')$, and by hypothesis
$1=b\vee\alpha(a')$. Then $1=\beta(1)=\beta(b)\vee \beta\alpha(a')$. Since $(\alpha,\beta)$ is a cosmall Galois
connection, it follows that $1=\beta(b)\vee a'$. Hence $\beta(b')$ is cosmall in $[\beta(b),1]$.

Conversely, assume that $\beta(b')$ is cosmall in $[\beta(b),1]$. Let $1=b'\vee b''$ for some $b''\in B$. Then
$1=\beta(1)=\beta(b')\vee \beta(b'')$, whence $1=\beta(b)\vee \beta(b'')$ by hypothesis. Then
$1=\alpha(1)=\alpha\beta(b)\vee \alpha\beta(b'')\leq b\vee b''$, and so $1=b\vee b''$. Hence $b'$ is cosmall in $[b,1]$.
\end{proof}

Let $(X,\wedge,\vee,0,1)$ be a bounded modular lattice. Recall that a subset $Y$ of $X\setminus \{1\}$ is called
\emph{meet-independent} if $(y_1\wedge \ldots \wedge y_n)\vee x=1$ for every finite subset $\{y_1,\dots,y_n\}$ of $Y$
and every $x\in Y\setminus \{y_1,\dots,y_n\}$. If there is a finite supremum $d$ of all numbers $k$ such that $X$
has a meet-independent subset with $k$ elements, then $X$ \emph{has dual Goldie dimension} (or \emph{hollow
dimension}) $d$; otherwise $X$ has infinite dual Goldie dimension (see \cite[Theorem~9]{GP}). We denote the dual Goldie
dimension of $X$ by ${\rm hdim}(X)$. 

\begin{cor} \label{c:dualg} Let $(A,\wedge,\vee,0,1)$ and $(B,\wedge,\vee,0,1)$ be bounded modular lattices, and  
$(\alpha,\beta)$ a cosmall Galois connection, where $\alpha:A\to B$, $\alpha(1)=1$ and $\beta:B\to A$ preserves
finite suprema. Then: 

(i) ${\rm hdim}(A)\leq {\rm hdim}(B)$. 

(ii) If every element of $A$ is Galois, then ${\rm hdim}(A)={\rm hdim}(B)$.
\end{cor}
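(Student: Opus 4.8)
The plan is to transfer meet-independent families between the two lattices, pushing them forward along $\alpha$ in one direction and along $\beta$ in the other, and then to read off the two inequalities of dual Goldie dimensions from the resulting correspondence of families.

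For part~(i), I would take a meet-independent subset $\{a_1,\dots,a_k\}$ of $A$ (so the $a_i$ are pairwise distinct and none equals $1$) and claim that $\{\alpha(a_1),\dots,\alpha(a_k)\}$ is a meet-independent subset of $B$ with exactly $k$ elements; since ${\rm hdim}$ is the supremum of the cardinalities of meet-independent subsets, letting $k$ range over all attainable values then yields ${\rm hdim}(A)\leq {\rm hdim}(B)$, the case of infinite dual Goldie dimension being handled at the same time. Three things need checking. First, $\alpha(a_i)\neq 1$: otherwise $\beta\alpha(a_i)=\beta(1)=1$, and since $(\alpha,\beta)$ is cosmall, $1$ is cosmall in $[a_i,1]$, so joining with $0$ forces $a_i=1$, a contradiction. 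Second, the $\alpha(a_i)$ are pairwise distinct: if $\alpha(a_i)=\alpha(a_j)$ with $i\neq j$, then from $a_i\vee a_j=1$ (meet-independence of the pair) and the fact that $\alpha$ preserves suprema one gets $\alpha(a_i)=\alpha(a_i)\vee\alpha(a_j)=\alpha(a_i\vee a_j)=\alpha(1)=1$, contradicting the first point. Third, meet-independence of the image: for a finite index set $I$ and $j\notin I$, monotonicity of $\alpha$ gives $\bigwedge_{i\in I}\alpha(a_i)\geq\alpha\bigl(\bigwedge_{i\in I}a_i\bigr)$, whence
\[
\Bigl(\bigwedge_{i\in I}\alpha(a_i)\Bigr)\vee\alpha(a_j)\geq\alpha\Bigl(\bigwedge_{i\in I}a_i\Bigr)\vee\alpha(a_j)=\alpha\Bigl(\bigl(\bigwedge_{i\in I}a_i\bigr)\vee a_j\Bigr)=\alpha(1)=1,
\]
again using that $\alpha$ preserves finite suprema, together with meet-independence of $\{a_1,\dots,a_k\}$ in $A$.

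For part~(ii) it suffices, given part~(i), to establish the reverse inequality ${\rm hdim}(B)\leq {\rm hdim}(A)$, and I would do this by the mirror-image argument with $\beta$ in place of $\alpha$. Starting from a meet-independent subset $\{b_1,\dots,b_k\}$ of $B$, I claim $\{\beta(b_1),\dots,\beta(b_k)\}$ works: $\beta(b_i)\neq 1$ because $\beta(b_i)=1$ would give $\alpha\beta(b_i)=\alpha(1)=1\leq b_i$, forcing $b_i=1$; the $\beta(b_i)$ are pairwise distinct because $\beta(b_i)=\beta(b_j)$ with $i\neq j$, together with $b_i\vee b_j=1$ and the hypothesis that $\beta$ preserves finite suprema, would give $\beta(b_i)=\beta(b_i\vee b_j)=\beta(1)=1$, contradicting the previous point; and meet-independence of the image follows since $\beta$ preserves all infima by Lemma~\ref{l:galois}(ii) and finite suprema by hypothesis, so $\bigwedge_{i\in I}\beta(b_i)=\beta\bigl(\bigwedge_{i\in I}b_i\bigr)$ and hence $\bigl(\bigwedge_{i\in I}\beta(b_i)\bigr)\vee\beta(b_j)=\beta\bigl((\bigwedge_{i\in I}b_i)\vee b_j\bigr)=\beta(1)=1$. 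Combining the two inequalities gives ${\rm hdim}(A)={\rm hdim}(B)$; one could also note that, since every element of $A$ is Galois, Lemma~\ref{l:galois}(iv) makes $\alpha$ and $\beta$ mutually inverse bijections between $A$ and the set of Galois elements of $B$.

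I expect the only genuinely non-formal points to be the two claims that the transferred family neither collapses (distinctness) nor reaches the top element: this is precisely where the cosmall hypothesis together with $\alpha(1)=1$ does its work in~(i), and where $\alpha(1)=1$ together with preservation of finite suprema by $\beta$ does its work in~(ii). Once these are secured, meet-independence of the image is purely formal, relying only on $\alpha$ preserving suprema --- respectively on $\beta$ preserving infima and finite suprema --- and on $\alpha(1)=\beta(1)=1$; modularity of $A$ and $B$ is used only to guarantee that ${\rm hdim}$ is defined in the first place.
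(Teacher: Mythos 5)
Your proof is correct, and while part~(i) is essentially the paper's own argument (you even verify meet-independence of the image directly for arbitrary finite subsets, avoiding the paper's appeal to the equivalent ``telescoping'' characterization $(a_1\wedge\cdots\wedge a_{k-1})\vee a_k=1$), your part~(ii) takes a genuinely different and simpler route. The paper fixes a meet-independent set $\{a_1,\dots,a_m\}$ in $A$ with $a_1\wedge\cdots\wedge a_m$ cosmall in $[0,1]$ (via \cite[Theorem~9]{GP}), assumes a larger meet-independent set exists in $B$, and runs an exchange argument dualizing \cite[Theorem~5]{GP}, using the Galois hypothesis to pull the resulting join relation back through $\beta$. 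You instead push an arbitrary meet-independent family of $B$ backward along $\beta$ and check the three required properties directly: this works because $\beta$ automatically preserves all infima (Lemma~\ref{l:galois}(ii)), preserves finite suprema by hypothesis, satisfies $\beta(1)=1$, and reflects the top element (since $\beta(b)=1$ forces $1=\alpha(1)=\alpha\beta(b)\leq b$). The comparison is instructive: your argument for ${\rm hdim}(B)\leq{\rm hdim}(A)$ uses neither the cosmall property nor the hypothesis that every element of $A$ is Galois, so combined with part~(i) it actually shows that the equality ${\rm hdim}(A)={\rm hdim}(B)$ already holds under the standing hypotheses of the corollary --- i.e., the extra assumption in~(ii) is superfluous and~(i) could be stated as an equality. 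What the paper's heavier machinery buys is mainly fidelity to the standard Goldie-dimension exchange technique; it does not appear to yield anything your direct transfer misses here. One tiny caveat: your closing remark that modularity is needed ``only to guarantee that ${\rm hdim}$ is defined'' is accurate for your version of the proof, but note that the paper's proof of~(i) does lean on modularity through the cited equivalence of meet-independence with the telescoping condition.
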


\begin{proof} (i) We prove that if $A$ has a finite meet-independent subset with $m$ elements, then so has $B$.
This will imply that ${\rm hdim}(A)\leq {\rm hdim}(B)$ in both finite and infinite cases for ${\rm hdim}(A)$. To this
end, let $\{a_1,\dots,a_m\}$ be a meet-independent subset of $A$. We claim that $Y=\{\alpha(a_1),\dots,\alpha(a_m)\}$ is
a meet-independent subset of $B$ with $m$ elements. 

First, we point out that $\alpha(a_i)\neq 1$ for every $i\in \{1,\dots,m\}$. Indeed, if there is $i\in
\{1,\dots,m\}$ such that $\alpha(a_i)=1$, then $\beta\alpha(a_i)=1$. Since $\beta\alpha(a_i)$ is cosmall in $[a_i,1]$,
we must have $a_i=1$, contradiction.

Secondly, suppose that $\alpha(a_i)=\alpha(a_j)$ for some $i,j\in \{1,\dots,m\}$ with $i\neq j$. Since $a_i\vee a_j=1$,
we have $\alpha(a_i)\vee \alpha(a_j)=\alpha(a_i\vee a_j)=\alpha(1)=1$. We conclude that $\alpha(a_i)=\alpha(a_j)=1$,
which contradicts the previous point. Hence $Y$ has $m$ elements.

Finally, the meet-independence of $\{a_1,\dots,a_m\}$ is known to be equivalent to the condition $(a_1\wedge \ldots
\wedge a_{k-1})\vee a_k=1$ for every $k\in \{2,\dots,m\}$. Let $k\in \{2,\dots,m\}$. Then $\alpha(a_1\wedge \ldots
\wedge a_{k-1})\vee \alpha(a_k)=1$. Since $\alpha(a_1\wedge \ldots \wedge a_{k-1})\leq \alpha(a_1)\wedge \ldots \wedge
\alpha(a_{k-1})$, it follows that $(\alpha(a_1)\wedge \ldots \wedge \alpha(a_{k-1}))\vee \alpha(a_k)=1$. This shows that
$Y$ is meet-independent.

(ii) Suppose that every element of $A$ is Galois. When $A$ has infinite dual Goldie dimension, the conclusion is clear
by (i). Assume that ${\rm hdim}(A)=m$. By \cite[Theorem~9]{GP}, $A$ has a meet-independent
subset $\{a_1,\dots,a_m\}$ such that $a_1\wedge \ldots \wedge a_m$ is cosmall in $[0,1]$. Now assume that there is a
meet-independent subset $Y$ of $B$ with more than $m$ elements, possibly infinite. Then
there is a meet-independent subset $\{b_1,\dots,b_{m+1}\}\subseteq Y$ of $B$. By dualizing an argument from the proof of
\cite[Theorem~5]{GP}, $\{\alpha(a_1),\dots,\alpha(a_m),b_{m+1}\}$ is also a meet-independent subset of $B$. Then we have
$(\alpha(a_1)\wedge \ldots \wedge \alpha(a_m))\vee b_{m+1}=1$. Since every element of $A$ is Galois, it follows that
$(a_1\wedge \ldots \wedge a_m)\vee \beta(b_{m+1})=(\beta\alpha(a_1)\wedge \ldots \wedge \beta\alpha(a_m))\vee
\beta(b_{m+1})=\beta((\alpha(a_1)\wedge \ldots \wedge \alpha(a_m))\vee b_{m+1})=\beta(1)=1$. Since $a_1\wedge \ldots
\wedge a_m$ is cosmall in $[0,1]$, we deduce that $\beta(b_{m+1})=1$. Then $1=\alpha\beta(b_{m+1})\leq b_{m+1}$, and so
$b_{m+1}=1$, which is a contradiction. Therefore, by (i) and the above, ${\rm hdim}(B)=m={\rm hdim}(A)$. 
\end{proof}

\section{Correspondences}

Let $(A,\leq)$ and $(B,\leq)$ be two lattices, and let $(\alpha,\beta)$ be a Galois connection, where $\alpha:A\to
B$ and $\beta:B\to A$. We have seen in Lemma \ref{l:cocgal} and Example \ref{e:ex} (2) that the set of coclosed
elements in $B$ is in general strictly included in the set of Galois elements in $B$. Also, we have already seen that
the restrictions of $\alpha$ and $\beta$ to the corresponding sets of Galois elements are mutually inverse bijections.
We shall show that, under certain conditions, these bijections restrict to ones between the corresponding sets of
complement Galois elements, or between the corresponding sets of coclosed Galois elements. 

\begin{lem} \label{l:compl} Let $(A,\wedge,\vee,0,1)$ and $(B,\wedge,\vee,0,1)$ be bounded lattices, and 
$(\alpha,\beta)$ a Galois connection, where $\alpha:A\to B$, $\alpha(1)=1$ and $\beta:B\to A$ preserves finite
suprema. Assume that all complements in $A$ are Galois elements. Then:

(i) If $\beta$ is injective, then $\alpha$ preserves complement Galois elements.

(ii) $\beta$ preserves complement Galois elements.

(iii) If $\beta$ is injective, then $\alpha$ and $\beta$ restrict to order-preserving mutually inverse bijections
between the sets of complement Galois elements in $A$ and $B$.
\end{lem}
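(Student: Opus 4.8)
The plan is to establish the three parts in sequence, exploiting the general Galois machinery (Lemma~\ref{l:galois}) together with the fact that in a modular lattice complements are automatically coclosed (Lemma~\ref{l:cosmall}(ii)), although here we only need that complements in $A$ are assumed Galois. Throughout, I would write ${\rm CGal}(A)$ and ${\rm CGal}(B)$ for the sets of complement Galois elements of $A$ and $B$. Since $\alpha$ and $\beta$ restrict to mutually inverse bijections between all Galois elements of $A$ and all Galois elements of $B$ (Lemma~\ref{l:galois}(iv)), part (iii) will be immediate once I know that both restrictions carry complement Galois elements to complement Galois elements, i.e.\ once (i) and (ii) are proved; the order-preservation is inherited from $\alpha,\beta$ being order-preserving by definition of a Galois connection. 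So the work is entirely in (i) and (ii).

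For (ii), let $b\in B$ be a complement Galois element, say $b\wedge b'=0$ and $b\vee b'=1$ for some $b'\in B$. I would first check $\beta(b)$ is Galois. Then I claim $\beta(b)$ is a complement in $A$, with partner $\beta(b')$: applying $\beta$, which preserves all infima (Lemma~\ref{l:galois}(ii)), gives $\beta(b)\wedge\beta(b')=\beta(b\wedge b')=\beta(0)=0$, using $\beta(0)=0$ (Lemma~\ref{l:galois}(iii), valid since $\alpha(1)=1$ gives the bounded-lattice conclusion). For the supremum I cannot use that $\beta$ preserves suprema in general, but I can use $\beta$ preserves \emph{finite} suprema by hypothesis: $\beta(b)\vee\beta(b')=\beta(b\vee b')=\beta(1)=1$. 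Hence $\beta(b)$ is a complement, and being also Galois it lies in ${\rm CGal}(A)$. (Notice the assumption ``all complements in $A$ are Galois'' is not even needed for (ii); it will be needed for (i).)

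For (i), assume $\beta$ is injective and let $a\in A$ be a complement Galois element, with $a\wedge a'=0$, $a\vee a'=1$. Since $\alpha$ preserves finite suprema (it preserves all suprema by Lemma~\ref{l:galois}(ii)), I get $\alpha(a)\vee\alpha(a')=\alpha(a\vee a')=\alpha(1)=1$. The delicate point is the meet: I want $\alpha(a)\wedge\alpha(a')=0$, but $\alpha$ need not preserve infima. Here is where injectivity of $\beta$ and the hypothesis on complements in $A$ enter. Set $x=\alpha(a)\wedge\alpha(a')\in B$, and apply $\beta$: since $\beta$ preserves infima, $\beta(x)=\beta\alpha(a)\wedge\beta\alpha(a')=a\wedge a'=0$ (using that $a,a'$ are Galois; note $a'$ is Galois because $a'$ is itself a complement in $A$, hence Galois by hypothesis). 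Thus $\beta(x)=0=\beta(0)$, and injectivity of $\beta$ forces $x=0$. Therefore $\alpha(a)$ is a complement in $B$ with partner $\alpha(a')$, and it is Galois since $a$ is Galois. This completes (i). Finally (iii): by (i) $\alpha$ maps ${\rm CGal}(A)$ into ${\rm CGal}(B)$, by (ii) $\beta$ maps ${\rm CGal}(B)$ into ${\rm CGal}(A)$, and these are restrictions of the mutually inverse bijections between Galois elements from Lemma~\ref{l:galois}(iv), hence mutually inverse bijections between ${\rm CGal}(A)$ and ${\rm CGal}(B)$, order-preserving since $\alpha$ and $\beta$ are.

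The main obstacle is the meet-preservation gap in part (i): $\alpha$ does not in general preserve infima, so one cannot directly conclude $\alpha(a)\wedge\alpha(a')=0$ from $a\wedge a'=0$. The resolution above — push the candidate meet down through $\beta$, use that $a,a'$ are Galois to land on $a\wedge a'=0$, then use injectivity of $\beta$ to pull back to $0$ in $B$ — is exactly what makes the hypotheses ``$\beta$ injective'' and ``all complements in $A$ are Galois'' indispensable, and I would flag that the argument for (i) genuinely needs the latter (to know $a'$ is Galois), whereas (ii) does not.
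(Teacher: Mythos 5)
Your overall strategy coincides with the paper's: prove (i) and (ii) by checking that $\alpha$ and $\beta$ send complement Galois elements to complement Galois elements, handle the meet in (i) by pushing $\alpha(a)\wedge\alpha(a')$ through $\beta$ and invoking injectivity, and get (iii) from Lemma~\ref{l:galois}(iv). However, there is one genuine error, and it occurs exactly where you claim a hypothesis is superfluous. In part (ii) you use $\beta(0)=0$ and justify it by Lemma~\ref{l:galois}(iii). That lemma gives $\alpha(0)=0$ and $\beta(1)=1$ only; for a Galois connection in the paper's orientation ($\alpha(a)\leq b \iff a\leq\beta(b)$), $\beta$ is the upper adjoint and preserves infima, so $\beta(1)=1$ is automatic but $\beta(0)=0$ is \emph{not}. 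Example~\ref{e:ex}(1) shows this concretely: there $\beta(0)=H_1\neq 0$. Consequently your parenthetical remark that the assumption ``all complements in $A$ are Galois'' is not needed for (ii) is false; that assumption is precisely what rescues the step. Since $0$ is a complement in $A$ (with partner $1$), the hypothesis makes $0$ a Galois element, whence $\beta(0)=\beta\alpha(0)=0$ because $\alpha(0)=0$. This is exactly how the paper opens its proof, before treating (i) and (ii). Without it, $\beta(b)\wedge\beta(b')=\beta(b\wedge b')=\beta(0)$ need not vanish and (ii) fails as argued.

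The rest is sound. In particular, in part (i) your deduction of $x=0$ from $\beta(x)=0$ does go through even without first knowing $\beta(0)=0$, since $\beta(0)\leq\beta(x)=0$ forces $\beta(0)=0=\beta(x)$ and then injectivity applies; and your observation that $a'$ must itself be Galois (being a complement in $A$) is a point the paper leaves implicit. But you should correct the justification of $\beta(0)=0$ in (ii) and retract the claim that the Galois hypothesis on complements of $A$ is dispensable there.
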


\begin{proof} First note that, since $0\in A$ is a Galois element, we have $\beta\alpha(0)=0$, and so $\beta(0)=0$.

(i) Assume that $\beta$ is injective. Let $a$ be a complement (Galois) element in $A$. Then there is
$a'\in A$ such that $a\wedge a'=0$ and $a\vee a'=1$. Hence $\alpha(a)\vee \alpha(a')=\alpha(1)=1$. Also, we have
$0=a\wedge a'=\beta\alpha(a)\wedge \beta\alpha(a')=\beta(\alpha(a)\wedge \alpha(a'))$, whence $\alpha(a)\wedge
\alpha(a')=0$ by the injectivity of $\beta$. Thus $\alpha(a)$ is a complement in $B$. By Lemma \ref{l:galois},
$\alpha(a)$ is a Galois element in $B$. 

(ii) Let $b$ be a complement Galois element in $B$. Then there is $b'\in B$ such that $b\wedge b'=0$ and $b\vee b'=1$.
Then $\beta(b)\wedge \beta(b')=\beta(0)=0$ and $\beta(b)\vee \beta(b')=\beta(1)=1$. Thus $\beta(b)$ is a complement in
$A$. By Lemma \ref{l:galois}, $\beta(b)$ is a Galois element in $A$.

(iii) Clear by (i), (ii) and Lemma \ref{l:galois}.
\end{proof}

Next we need to recall the following notions, which are the lattice-theoretic versions of the corresponding ones
for modules (e.g., see \cite{CLVW}).

\begin{defn} \label{d:lifting} \rm Let $(A,\wedge,\vee,0,1)$ be a bounded lattice. 

(1) An element $a\in A$ is called a {\it supplement} if it is a supplement of some $a'\in A$, that is, $a$ is
minimal in $A$ with the property that $1=a\vee a'$.

(2) $A$ is called {\it supplemented} if every $a\in A$ has a supplement in $A$. 

(3) $A$ is called {\it amply supplemented} if for every $a\in A$ there exists a supplement $x\in A$ such that $a$ is
cosmall in $[x,1]$.

(4) $A$ is called {\it UCC} if every $a\in A$ has a unique coclosure in $A$, which will be denoted by $\bar a$. 

(5) $A$ is called {\it lifting} if for every $a\in A$ there exists a complement $x\in A$ such that
$a$ is cosmall in $[x,1]$.
\end{defn}

The following lemma has a similar proof as for modules (see \cite[Corollary~3.8]{GV},
\cite[Lemma~1.1 and Proposition~1.5]{Keskin} and \cite[Chapter~41]{Wis}). 

\begin{lem} \label{l:suppl} Let $(A,\wedge,\vee,0,1)$ be a bounded modular lattice. 

(i) If $a\in A$ is a supplement, then $a$ is coclosed.

(ii) If $A$ is supplemented and $a\in A$ is coclosed, then $a$ is a supplement. 

(iii) If $A$ is amply supplemented, then every element $a\in A$ has a coclosure in $A$.  

(iv) If $A$ is amply supplemented UCC and $a_1,a_2\in A$ are such that $a_1\leq a_2$, then their coclosures satisfy 
$\overline{a_1}\leq \overline{a_2}$.  
\end{lem}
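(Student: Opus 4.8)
The plan is to prove the four parts in order: the first three by short direct arguments, the last by reduction to a single claim. For (i), I would argue straight from the definition. If $a$ is a supplement of $a'$, i.e.\ $a$ is minimal among the elements $x$ with $x\vee a'=1$, and $b\le a$ with $a$ cosmall in $[b,1]$, then applying cosmallness to $x=a'$ gives $b\vee a'=1$, so minimality of $a$ forces $a\le b$, whence $b=a$; thus $a$ is coclosed (modularity is not even needed here). For (ii), since $A$ is supplemented fix a supplement $c$ of $a$; then $a\wedge c$ has the property that $x\le c$ and $(a\wedge c)\vee x=c$ force $x=c$ (indeed $a\vee x=a\vee c=1$, so $x=c$ by minimality of $c$). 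Now $a$ is a supplement of $c$: given $b\le a$ with $b\vee c=1$, modularity gives $a=a\wedge(b\vee c)=b\vee(a\wedge c)$, and for any $x$ with $a\vee x=1$ one intersects $b\vee(a\wedge c)\vee x=1$ with $c$ and uses modularity together with the above property of $a\wedge c$ to obtain $c\le b\vee x$, hence $b\vee x=1$; so $a$ is cosmall in $[b,1]$, and coclosedness of $a$ yields $b=a$, so $a$ is minimal with $a\vee c=1$. Part (iii) is then immediate: if $A$ is amply supplemented, each $a\in A$ has a supplement $x$ with $x\le a$ and $a$ cosmall in $[x,1]$, and $x$ is coclosed by (i), so $x$ is a coclosure of $a$.

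For (iv), coclosures exist by (iii) and are unique by the UCC hypothesis, so $\overline a$ is well defined. Given $a_1\le a_2$, set $c=\overline{a_1}\vee\overline{a_2}$, so that $\overline{a_2}\le c\le a_2$; since $a_2$ is cosmall in $[\overline{a_2},1]$, Lemma~\ref{l:cosmall}(i) gives that $a_2$ is cosmall in $[c,1]$. Hence, if $c$ were coclosed, then $c$ would be a coclosure of $a_2$, so $c=\overline{a_2}$ by uniqueness, and $\overline{a_1}\le c=\overline{a_2}$. Thus (iv) reduces to the single claim that $c=\overline{a_1}\vee\overline{a_2}$ is coclosed.

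This last claim is the main obstacle. It cannot be obtained formally: in a modular non-distributive lattice one cannot infer $(p\wedge q)\vee x=1$ from $p\vee x=1=q\vee x$, which is what a naive argument would require. Instead amply-supplementedness must be used essentially — for instance, one fixes a supplement $c_1$ of $a_1$ and, by ample supplementation applied to $a_2$, a supplement $c_2$ of $a_2$ with $c_2\le c_1$, and then transfers coclosures through the intervals $[c_i,1]$, following the module-theoretic arguments in \cite{GV}, \cite{Keskin} and \cite{Wis}. With that claim in hand, the rest is the bookkeeping above.
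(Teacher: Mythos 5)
A preliminary caveat on the comparison: the paper does not actually prove this lemma; it only remarks that the proof is ``similar as for modules'' and cites \cite{GV}, \cite{Keskin} and \cite{Wis}, so there is no written argument to match yours against. Your proofs of (i), (ii) and (iii) are correct and complete: (i) is the direct minimality argument (and you are right that modularity is not needed there); (ii) correctly transposes the standard module computation to modular lattices via the sub-claim that $a\wedge c$ is ``small under $c$''; and (iii) is immediate from the paper's coclosure-flavoured Definition \ref{d:lifting}(3) together with (i). In these three parts you supply strictly more detail than the paper does.

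Part (iv), however, contains a genuine gap. Your reduction is valid: with $c=\overline{a_1}\vee\overline{a_2}$ one has $\overline{a_2}\le c\le a_2$, Lemma \ref{l:cosmall}(i) makes $a_2$ cosmall in $[c,1]$, and UCC then forces $c=\overline{a_2}$ \emph{provided} $c$ is coclosed. But that proviso is the whole content of (iv): the assertion that the join of two coclosed elements is coclosed in an amply supplemented UCC modular lattice is itself one of the substantive results underlying \cite[Corollary~3.8]{GV}, and you state it, identify it as the obstacle, and do not prove it. Moreover, the one concrete hint you offer --- choosing a supplement $c_2$ of $a_2$ \emph{inside} a previously fixed supplement $c_1$ of $a_1$ --- appeals to the classical notion of ample supplements (a supplement of $a_2$ located inside any prescribed element whose join with $a_2$ is $1$), whereas Definition \ref{d:lifting}(3) of this paper only guarantees, for each element, \emph{some} supplement which is a coclosure, with no control over its position; the subsequent ``transfer through the intervals $[c_i,1]$'' is not spelled out and the relevant coclosures do not obviously live in those intervals. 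To close the gap you must either prove the coclosedness of $\overline{a_1}\vee\overline{a_2}$ from the paper's hypotheses, or argue directly that a coclosed element below $a_2$ (such as $\overline{a_1}$) lies below the unique coclosure of $a_2$; either way the amply supplemented and UCC hypotheses have to enter in a way your proposal does not yet make precise.
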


Now we may relate supplemented and lifting type properties of some bounded modular lattices. 

\begin{thm} \label{t:lifting} Let $(A,\wedge,\vee,0,1)$ and $(B,\wedge,\vee,0,1)$ be bounded modular lattices and
$(\alpha,\beta)$ a Galois connection, where $\alpha:A\to B$, $\alpha(1)=1$ and $\beta:B\to A$ preserves
finite suprema. Assume that every element of $A$ is Galois. Then:

(i) $A$ is supplemented if and only if $B$ is supplemented. 

(ii) If $B$ is lifting, then $A$ is lifting. Conversely, if $\beta$ is injective and $A$ is lifting, then $B$ is
lifting.
\end{thm}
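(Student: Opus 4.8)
The plan is to prove Theorem \ref{t:lifting} by exploiting the fact that, when every element of $A$ is Galois, $\beta$ is injective on $B$ if and only if $\alpha$ is surjective, and the restricted maps $\alpha, \beta$ set up a bijection between the Galois elements of $A$ (which is all of $A$) and the Galois elements of $B$. Crucially, by Lemma \ref{l:transfer}, this bijection preserves the cosmall relation in both directions, and by Lemma \ref{l:compl} it preserves complements; these are exactly the ingredients needed to transport supplement/coclosed/lifting data back and forth.

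For part (i), I would argue as follows. Suppose $A$ is supplemented; I want $B$ supplemented. Take $b \in B$. Then $\beta(b) \in A$ has a supplement $a \in A$, so $a$ is minimal with $a \vee \beta(b) = 1$; by Lemma \ref{l:suppl}(i), $a$ is coclosed, hence Galois (every element of $A$ is Galois, but more to the point we will use that $a = \beta\alpha(a)$). Applying $\alpha$ and using $\alpha(1)=1$ gives $\alpha(a) \vee \alpha\beta(b) = 1$, and since $\alpha\beta(b) \leq b$ we get $\alpha(a) \vee b = 1$. I then need that $\alpha(a)$ contains a supplement of $b$; since $B$ need not be amply supplemented a priori, the cleanest route is to show directly that $\alpha(a)$ is itself a supplement of $b$ in $B$: if $c \leq \alpha(a)$ with $c \vee b = 1$, apply $\beta$ (which preserves finite suprema) to get $\beta(c) \vee \beta(b) = 1$ with $\beta(c) \leq \beta\alpha(a) = a$; by minimality of $a$, $\beta(c) = a$, and then applying $\alpha$ yields $\alpha\beta(c) \leq c$, i.e. $c \geq \alpha\beta(c) = \alpha(a)$ — wait, $\alpha\beta(c)$ need not equal $\alpha(a)$. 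The correct move: from $\beta(c) = a = \beta\alpha(a)$ and $c \leq \alpha(a)$, together with $\alpha$ preserving nothing useful here, instead use that $\beta$ is order-preserving and $\beta(c) = \beta\alpha(a)$ forces, via $\alpha\beta\alpha = \alpha$, that $\alpha(c) \leq \alpha\beta\alpha(a) \vee \dots$; the honest fix is to work with the coclosure characterization in Lemma \ref{l:suppl}(ii) instead of raw minimality, so I would phrase: in a modular supplemented lattice, supplements coincide with coclosed elements, transport coclosedness via Lemma \ref{l:transfer} and Lemma \ref{l:cocgal}, then reconvert. Conversely, if $B$ is supplemented, take $a \in A = $ Galois elements; $\alpha(a) \in B$ has a supplement $b$, which is coclosed hence (Lemma \ref{l:cocgal}(ii)) Galois, so $b = \alpha\beta(b)$, and $\beta(b) \vee a = \beta(b) \vee \beta\alpha(a) = \beta(b \vee \alpha(a)) = \beta(1) = 1$; minimality transports by applying $\alpha$ and using injectivity-type arguments on the Galois side, giving $\beta(b)$ a supplement of $a$.

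For part (ii): if $B$ is lifting, take $a \in A$; then $\alpha(a)$ has a complement $x \in B$ with $\alpha(a)$ cosmall in $[x,1]$. In a modular lattice complements are coclosed (Lemma \ref{l:cosmall}(ii)), hence $x$ is coclosed, hence Galois (Lemma \ref{l:cocgal}(ii)), so $x = \alpha\beta(x)$. By Lemma \ref{l:compl}(ii), $\beta(x)$ is a complement in $A$. I claim $a$ is cosmall in $[\beta(x), 1]$: indeed $\alpha(a)$ cosmall in $[x,1]=[\alpha\beta(x),1]$ translates, via Lemma \ref{l:transfer}(ii) applied with $b' = \alpha(a)$... — here I must be slightly careful, since Lemma \ref{l:transfer} is stated for a \emph{cosmall} Galois connection, whereas I only have "every element of $A$ is Galois"; so the first sub-step is to verify that under the hypothesis that every element of $A$ is Galois, $(\alpha,\beta)$ is automatically cosmall (if $a$ is Galois then $\beta\alpha(a) = a$, which is trivially cosmall in $[a,1]$), so Lemma \ref{l:transfer} and Corollary \ref{c:dualg} apply. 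With that in hand, $\beta(\alpha(a)) = a$ cosmall in $[\beta(x),1]$ follows from Lemma \ref{l:transfer}(ii), giving $A$ lifting. For the converse, assume $\beta$ injective and $A$ lifting; take $b \in B$, which need not be Galois, but $\beta(b) \in A$ has a complement $y \in A$ with $\beta(b)$ cosmall in $[y,1]$. Then $y$ is a complement Galois element (modular lattice, plus the standing hypothesis), so by Lemma \ref{l:compl}(i) (using injectivity of $\beta$) $\alpha(y)$ is a complement in $B$. It remains to show $b$ is cosmall in $[\alpha(y), 1]$: we know $b$ is cosmall in $[\alpha\beta(b),1]$ by Lemma \ref{l:cocgal}(i), and $\alpha(\beta(b))$ versus $\alpha(y)$ — since $\beta(b)$ is cosmall in $[y,1]$ with $y \leq \beta(b)$, Lemma \ref{l:transfer}(i) gives $\alpha(\beta(b))$ cosmall in $[\alpha(y),1]$; combining with $b$ cosmall in $[\alpha\beta(b),1]$ via Lemma \ref{l:cosmall}(i) (transitivity of cosmallness, noting $\alpha(y) \leq \alpha\beta(b) \leq b$) yields $b$ cosmall in $[\alpha(y),1]$, so $B$ is lifting.

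The main obstacle I anticipate is the careful bookkeeping in part (i): "supplement" is a minimality condition that does not transport as cleanly under order-preserving maps as the cosmall relation does, so the key technical point is to route everything through the equivalence (valid in modular supplemented lattices, Lemma \ref{l:suppl}(i)–(ii)) between being a supplement and being coclosed, and then use Lemmas \ref{l:transfer} and \ref{l:cocgal} to move coclosedness across the Galois connection. A secondary subtlety worth flagging explicitly at the start of the proof is that the hypothesis "every element of $A$ is Galois" forces $(\alpha,\beta)$ to be a cosmall Galois connection, which is what licenses the use of Lemma \ref{l:transfer} and Corollary \ref{c:dualg}; once that observation is made, parts (i) and (ii) become largely a matter of assembling the already-proved lemmas in the right order.
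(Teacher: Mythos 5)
Your overall strategy is the paper's: first observe that ``every element of $A$ is Galois'' makes $(\alpha,\beta)$ cosmall (so Lemma \ref{l:transfer} applies), then transport supplements directly by checking minimality, and transport the lifting property via Lemmas \ref{l:transfer}, \ref{l:cosmall}, \ref{l:cocgal} and \ref{l:compl}. Your part (ii), in both directions, is essentially identical to the paper's proof and is correct, including the explicit verification of cosmallness of the connection that the paper leaves implicit.

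The problem is in the forward half of (i), and it is a self-inflicted one. The direct minimality argument you wrote down first is correct and is exactly the paper's: if $c\leq\alpha(a)$ and $c\vee b=1$, then $\beta(c)\vee\beta(b)=1$ with $\beta(c)\leq\beta\alpha(a)=a$, so $\beta(c)=a$ by minimality of $a$; your worry that ``$\alpha\beta(c)$ need not equal $\alpha(a)$'' is unfounded, since $\beta(c)=a$ gives $\alpha\beta(c)=\alpha(a)$ simply by applying the map $\alpha$ to both sides, and then $\alpha(a)=\alpha\beta(c)\leq c\leq\alpha(a)$ forces $c=\alpha(a)$. Worse, the ``honest fix'' you retreat to is circular: Lemma \ref{l:suppl}(ii) converts a coclosed element of $B$ into a supplement only when $B$ is already known to be supplemented, which is precisely the conclusion you are trying to establish; transporting coclosedness across the connection therefore cannot by itself produce a supplement in $B$. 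You should delete the detour and keep your original computation. The converse half of (i) is sketched in the right direction but the phrase ``minimality transports by applying $\alpha$ and using injectivity-type arguments'' needs to be replaced by the actual argument (as in the paper): if $x'\leq\beta(y)$ with $a\vee x'=1$, then $\alpha(a)\vee\alpha(x')=1$ and $\alpha(x')\leq\alpha\beta(y)\leq y$, so $\alpha(x')=y$ by minimality of $y$, whence $x'=\beta\alpha(x')=\beta(y)$ because $x'$ is Galois; no injectivity of $\beta$ is needed here.
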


\begin{proof} (i) First assume that $A$ is supplemented. Let $b\in B$. Then $\beta(b)$ has a supplement $x$ in $A$.
Hence $1=\beta(b)\vee x$, which implies that $1=\alpha(1)=\alpha\beta(b)\vee \alpha(x)\leq b\vee \alpha(x)$, and so
$1=b\vee \alpha(x)$. Now let $1=b\vee y'$ for some $y'\in B$ with $y'\leq \alpha(x)$. Then $1=\beta(1)=\beta(b)\vee
\beta(y')$. Since $x$ is Galois, we have $\beta(y')\leq \beta\alpha(x)=x$, whence $\beta(y')=x$ by
the minimality of $x$. It follows that $\alpha(x)=\alpha\beta(y')\leq y'$, and so $y'=\alpha(x)$.
This shows the minimality of the supplement $\alpha(x)$ of $b$ in $B$. Thus $B$ is supplemented. 

Conversely, assume that $B$ is supplemented, and let $a\in A$. Then $\alpha(a)$ has a supplement
$y$ in $B$, and so $1=\alpha(a)\vee y$. By hypothesis it follows that $1=\beta(1)=\beta\alpha(a)\vee
\beta(y)=a\vee \beta(y)$. Now let $1=a\vee x'$ for some $x'\in A$ with
$x'\leq \beta(y)$. Then $1=\alpha(1)=\alpha(a)\vee \alpha(x')$ and $\alpha(x')\leq \alpha\beta(y)\leq y$. By the
minimality of $y$, $\alpha(x')=y$. Since $x'$ is Galois, we have $x'=\beta\alpha(x')=\beta(y)$. This
shows the minimality of the supplement $\beta(y)$ of $a$ in $A$. Thus $A$ is supplemented.
 
(ii) Assume that $B$ is lifting. Let $a\in A$. Then there exists a complement $b'\in B$ such that $\alpha(a)$ is
cosmall in $[b',1]$. Since $a$ is Galois, $a=\beta\alpha(a)$ is cosmall
in $[\beta(b'),1]$ by Lemma \ref{l:transfer}. By Lemmas \ref{l:cosmall} and \ref{l:cocgal}, $b'$ is a Galois element in
$B$. Then by Lemma \ref{l:compl}, $\beta(b')$ is a complement in $A$. It follows that $A$ is lifting. 

Now assume that $\beta$ is injective and $A$ is lifting. Let $b\in B$. Then there exists a complement $a'\in A$ such
that $\beta(b)$ is cosmall in $[a',1]$. By Lemma \ref{l:transfer}, $\alpha\beta(b)$ is cosmall in $[\alpha(a'),1]$. By 
Lemma \ref{l:cocgal}, $b$ is cosmall in $[\alpha\beta(b),1]$. Hence by Lemma \ref{l:cosmall}, $b$ is cosmall in
$[\alpha(a'),1]$. By Lemma \ref{l:compl}, $\alpha(a')$ is a complement in $B$. It follows that $B$ is lifting.
\end{proof}

Next we establish our general theorem on a bijective correspondence between sets of coclosed elements induced by some
special Galois connections. In order to obtain it, it is natural to try to define some maps by means of unique
coclosures of elements, when they do exist. We see in the following theorem that the Galois connection
and the condition that these maps are well-defined create a slightly asymmetric situation. 

\begin{thm} \label{t:main} Let $(A,\wedge,\vee,0,1)$ and $(B,\wedge,\vee,0,1)$ be bounded lattices and 
$(\alpha,\beta)$ a UCC cosmall Galois connection, where $\alpha:A\to B$, $\alpha(1)=1$ and $\beta:B\to A$ preserves
finite suprema. For $x\in A\cup B$ denote by $\bar{x}$ the unique coclosure of $x$, when it does exist. 

(i) Denote
\begin{align*} \mathcal{A}&=\{a\in A\mid a \textrm{ is coclosed in $A$ and $\alpha(a)$ has a unique coclosure in
$B$}\}, \\ 
\mathcal{B}&=\{b\in B\mid b \textrm{ is coclosed in $B$ and $\beta(b)$ has a unique coclosure in $A$}\}.
\end{align*}
Consider $\varphi:\mathcal{A}\to \mathcal{B}$ defined by $\varphi(a)=\overline{\alpha(a)}$ for every $a\in \mathcal{A}$,
and $\psi:\mathcal{B}\to \mathcal{A}$ defined by $\psi(b)=\overline{\beta(b)}$ for every $b\in \mathcal{B}$.
Then $\varphi$ is a well-defined map if and only if $\mathcal{A}$ coincides with the set of coclosed elements $a\in
A$ such that $\alpha(a)$ is coclosed in $B$. Also, $\psi$ is a well-defined map. 

(ii) Denote
\begin{align*} \mathcal{A}&=\{a\in A\mid a \textrm{ is coclosed in $A$ and $\alpha(a)$ is coclosed in $B$}\}, \\
\mathcal{B}&=\{b\in B\mid b \textrm{ is coclosed in $B$ and $\beta(b)$ has a unique coclosure in $A$}\}.
\end{align*}
Then the maps $\varphi:\mathcal{A}\to \mathcal{B}$ defined by $\varphi(a)=\alpha(a)$ for every $a\in \mathcal{A}$, 
and $\psi:\mathcal{B}\to \mathcal{A}$ defined by $\psi(b)=\overline{\beta(b)}$ for every $b\in \mathcal{B}$,
are mutually inverse bijections.  
\end{thm}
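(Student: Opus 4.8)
The plan is to establish Theorem~\ref{t:main}(ii) by first pinning down exactly what $\mathcal{A}$ looks like in this setting, then showing $\varphi$ and $\psi$ land in the right sets, and finally checking they are mutually inverse. The key preliminary observation is that every coclosed element of $A$ is Galois: since $(\alpha,\beta)$ is cosmall, $\beta\alpha(a)$ is cosmall in $[a,1]$, so if $a$ is coclosed then $a=\beta\alpha(a)$. Symmetrically, by Lemma~\ref{l:cocgal}(ii), every coclosed element of $B$ is Galois. Thus on coclosed elements $\alpha$ and $\beta$ behave like genuine inverse bijections (Lemma~\ref{l:galois}(iv)), and the only subtlety is that $\alpha(a)$ or $\beta(b)$ need not itself be coclosed even when $a$ or $b$ is.

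Next I would analyze $\varphi$. Let $a\in\mathcal{A}$, so $a$ is coclosed and $\alpha(a)$ is coclosed in $B$; I must show $\varphi(a)=\alpha(a)\in\mathcal{B}$, i.e.\ that $\beta(\alpha(a))$ has a unique coclosure in $A$. But $a$ is coclosed and, using that $(\alpha,\beta)$ is cosmall, $a=\beta\alpha(a)$ is itself cosmall in $[a,1]$ trivially — more to the point, $\beta\alpha(a)=a$ is already coclosed, so it is its own coclosure, and the UCC hypothesis guarantees uniqueness. Hence $\varphi$ is well-defined into $\mathcal{B}$. For $\psi$: let $b\in\mathcal{B}$, so $b$ is coclosed in $B$ and $\beta(b)$ has a unique coclosure $\overline{\beta(b)}=:a$ in $A$. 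Then $a$ is coclosed in $A$ by definition of coclosure, and I need $\alpha(a)$ coclosed in $B$ and $\alpha(a)$ to have a unique coclosure — but once $\alpha(a)$ is coclosed, it is trivially its own unique coclosure via UCC, so it suffices to prove $\alpha(a)$ is coclosed. Here I would argue: $\beta(b)$ is cosmall in $[a,1]$, so by Lemma~\ref{l:transfer}(i), $\alpha\beta(b)$ is cosmall in $[\alpha(a),1]$; and by Lemma~\ref{l:cocgal}(i), $b$ is cosmall in $[\alpha\beta(b),1]$; chaining with Lemma~\ref{l:cosmall}(i), $b$ is cosmall in $[\alpha(a),1]$. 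Since $b$ is coclosed in $B$, its only cosmall ``extension downward'' from below forces $\alpha(a)=b$, so $\alpha(a)=b$ is coclosed. This simultaneously shows $\psi(b)\in\mathcal{A}$ and gives $\alpha\psi(b)=b$.

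Finally I would verify the composites. From the computation just made, $\varphi\psi(b)=\alpha(\overline{\beta(b)})=\alpha(a)=b$, so $\varphi\circ\psi=\mathrm{id}_{\mathcal{B}}$. For the other composite, take $a\in\mathcal{A}$; then $\varphi(a)=\alpha(a)$, and $\psi(\alpha(a))=\overline{\beta\alpha(a)}=\overline{a}=a$, the last equality because $a$ is already coclosed and hence equals its unique coclosure, while $\beta\alpha(a)=a$ since coclosed elements of $A$ are Galois. Thus $\psi\circ\varphi=\mathrm{id}_{\mathcal{A}}$. Order-preservation of $\varphi$ is immediate since $\alpha$ is order-preserving; order-preservation of $\psi$ follows because $\beta$ is order-preserving and, by Lemma~\ref{l:suppl}(iv) or directly from the UCC structure, passing to unique coclosures is monotone — alternatively, since $\psi$ is the inverse of the order-preserving bijection $\varphi$ between these sets, it is automatically order-preserving.

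The step I expect to be the main obstacle is showing that $\psi$ maps $\mathcal{B}$ into $\mathcal{A}$, that is, proving $\alpha(\overline{\beta(b)})$ is coclosed in $B$; this is exactly where the asymmetry mentioned in the paragraph preceding the theorem bites, and it requires carefully threading Lemmas~\ref{l:transfer}, \ref{l:cocgal}, and \ref{l:cosmall} together to transport the cosmall relation $\beta(b)\subseteq\overline{\beta(b)}$ across $\alpha$ and then collapse it using coclosedness of $b$. Once that is done, everything else is bookkeeping with the fact that coclosed elements on both sides are Galois.
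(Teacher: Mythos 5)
Your ``key preliminary observation'' --- that every coclosed element of $A$ is Galois because $(\alpha,\beta)$ is cosmall --- is false, and the paper itself exhibits a counterexample satisfying every hypothesis of Theorem \ref{t:main}: in Example \ref{e:ex}(1), $H_2$ is coclosed in the domain $A=S(G)$ yet $\beta\alpha(H_2)=H_4\neq H_2$. The mistake is one of direction. Cosmallness of the connection says $\beta\alpha(a)$ is cosmall in $[a,1]$, where $a\leq\beta\alpha(a)$; the definition of coclosedness would collapse this to $a=\beta\alpha(a)$ only if the \emph{upper} element $\beta\alpha(a)$ were known to be coclosed, whereas you only know that the lower element $a$ is. (The situation in $B$ is genuinely different, since there $\alpha\beta(b)\leq b$, which is why Lemma \ref{l:cocgal}(ii) works only on that side --- this is exactly the asymmetry the theorem is built around.) Fortunately, the two places where you use the false claim are repairable by quoting the UCC hypothesis verbatim: for coclosed $a$, $a$ \emph{is} the unique coclosure of $\beta\alpha(a)$, which is precisely what is needed both for the well-definedness of $\varphi$ and for $\psi\varphi(a)=\overline{\beta\alpha(a)}=a$; there is no need to pass through the (generally false) identity $\beta\alpha(a)=a$. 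Your handling of the genuine crux --- showing $\alpha(\overline{\beta(b)})=b$ by chaining Lemmas \ref{l:transfer}, \ref{l:cocgal} and \ref{l:cosmall} and then using coclosedness of $b$ --- is correct and coincides with the paper's argument.

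The second gap is that you prove only part (ii). Part (i) contains a nontrivial ``only if'' direction: assuming merely that $\varphi(a)=\overline{\alpha(a)}$ lands in $\mathcal{B}$, one must deduce that $\alpha(a)$ is already coclosed, i.e.\ $\overline{\alpha(a)}=\alpha(a)$. The paper does this by taking the unique coclosure $a_0$ of $\beta(\overline{\alpha(a)})$, showing via Lemmas \ref{l:transfer} and \ref{l:cosmall} that $\beta\alpha(a)$ is cosmall in $[a_0,1]$ so that $a_0=a$ by the UCC property, and then transporting the resulting cosmall relation back through $\alpha$ (Lemmas \ref{l:cocgal} and \ref{l:transfer}) to conclude that $\overline{\alpha(a)}$ is cosmall in $[\alpha(a),1]$, forcing equality since $\overline{\alpha(a)}$ is coclosed. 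None of this appears in your proposal, so as written part (i) is simply missing.
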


\begin{proof} (i) First assume that $\varphi$ is well-defined. Let $a\in \mathcal{A}$. Then
$\varphi(a)=\overline{\alpha(a)}\in \mathcal{B}$, and so $\beta(\overline{\alpha(a)})$ has a unique coclosure,
say $a_0$. Then $\beta(\overline{\alpha(a)})$ is cosmall in $[a_0,1]$. Since $\alpha(a)$ is cosmall in
$[\overline{\alpha(a)},1]$, $\beta\alpha(a)$ is cosmall in $[\beta(\overline{\alpha(a)}),1]$ by Lemma \ref{l:transfer}.
Then by Lemma \ref{l:cosmall} $\beta\alpha(a)$ is cosmall in $[a_0,1]$. Since $(\alpha,\beta)$ is UCC cosmall, $a$ is
the unique coclosure of $\beta\alpha(a)$, hence $a_0=a$. Then $\beta(\overline{\alpha(a)})$ is cosmall in $[a,1]$. By
Lemmas \ref{l:cocgal} and \ref{l:transfer}, $\overline{\alpha(a)}=\alpha\beta(\overline{\alpha(a)})$ is cosmall in
$[\alpha(a),1]$. Hence $\overline{\alpha(a)}=\alpha(a)$, and so $\alpha(a)$ is coclosed in $B$.  

Now assume that $\mathcal{A}$ coincides with the set of coclosed elements $a\in A$ such that $\alpha(a)$ is coclosed in
$B$. Then $\varphi(a)=\overline{\alpha(a)}=\alpha(a)$ for every $a\in \mathcal{A}$. Let $a\in \mathcal{A}$. Then
$\alpha(a)$ is coclosed in $B$ by hypothesis. Since $(\alpha,\beta)$ is UCC cosmall, $\beta\alpha(a)$ has a unique
coclosure in $A$. Hence $\varphi(a)=\alpha(a)\in \mathcal{B}$, and so $\varphi$ is well-defined. 

In order to show that $\psi$ is well-defined, let $b\in \mathcal{B}$. Then $\overline{\beta(b)}$ is coclosed in $A$.
Since $\beta(b)$ is cosmall in $[\overline{\beta(b)},1]$, $\alpha\beta(b)$ is cosmall in
$[\alpha(\overline{\beta(b)}),1]$ by Lemma \ref{l:transfer}. Since $b\in B$ is coclosed, we have $\alpha\beta(b)=b$ by
Lemma \ref{l:cocgal}. Then $b$ is cosmall in $[\alpha(\overline{\beta(b)}),1]$. It follows that
$\alpha(\overline{\beta(b)})=b$, because $b\in B$ is coclosed. Hence $\alpha(\overline{\beta(b)})$ is coclosed in $B$.
Thus $\psi(b)=\overline{\beta(b)}\in \mathcal{A}$. Note that $\psi$ is also well-defined if the codomain is the set of
coclosed elements $a\in A$ such that $\alpha(a)$ is coclosed in $B$. 

(ii) The maps $\varphi$ and $\psi$ are well-defined by (i). If $a\in \mathcal{A}$, then $a$ is the unique coclosure of
$\beta\alpha(a)$ in $A$ because $(\alpha,\beta)$ is UCC cosmall, and we have
$\psi\varphi(a)=\overline{\beta\alpha(a)}=a$. If $b\in \mathcal{B}$, then we have
$\varphi\psi(b)=\alpha(\overline{\beta(b)})=b$ as above. Therefore, $\varphi$ and $\psi$ are mutually inverse
bijections. 
\end{proof}

We apply Theorem \ref{t:main} in two relevant situations as follows. The first one, when every element in $A$ is Galois,
will be particularly considered in the last section of this paper. The second one, when $A$ is amply supplemented
modular and $(\alpha,\beta)$ is UCC cosmall, may be applied to any cosmall Galois connection $(\alpha,\beta)$ between
finite UCC abelian groups (e.g., see \cite{CO}, \cite{CS}). 

\begin{thm} \label{t:main2} Let $(A,\wedge,\vee,0,1)$ and $(B,\wedge,\vee,0,1)$ be bounded lattices and 
$(\alpha,\beta)$ a Galois connection, where $\alpha:A\to B$, $\alpha(1)=1$ and $\beta:B\to A$ preserves
finite suprema. Then there are mutually inverse bijections between the sets $\mathcal{C}_A$ of coclosed
elements in $A$ and $\mathcal{C}_B$ of coclosed elements in $B$ provided one of the following conditions holds:

(i) Every element in $A$ is Galois. 

(ii) $A$ is amply supplemented modular and $(\alpha,\beta)$ is UCC cosmall. 

If either every element in $A$ is Galois, or $A$ is amply supplemented modular UCC and $(\alpha,\beta)$ is cosmall,
then the above bijections are order-preserving.
\end{thm}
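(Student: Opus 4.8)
The plan is to derive both assertions from Theorem~\ref{t:main}(ii). Recall that, when $(\alpha,\beta)$ is a UCC cosmall Galois connection with $\alpha(1)=1$ and $\beta$ preserving finite suprema, that theorem produces mutually inverse bijections $\varphi\colon\mathcal{A}\to\mathcal{B}$, $\varphi(a)=\alpha(a)$, and $\psi\colon\mathcal{B}\to\mathcal{A}$, $\psi(b)=\overline{\beta(b)}$, where $\mathcal{A}$ is the set of $a\in A$ that are coclosed in $A$ with $\alpha(a)$ coclosed in $B$, and $\mathcal{B}$ is the set of $b\in B$ that are coclosed in $B$ with $\beta(b)$ having a unique coclosure in $A$. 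Since $\mathcal{A}\subseteq\mathcal{C}_A$ and $\mathcal{B}\subseteq\mathcal{C}_B$ by definition, the whole statement reduces to three verifications: (a) $(\alpha,\beta)$ is UCC cosmall; (b) $\mathcal{B}=\mathcal{C}_B$, i.e.\ $\beta(b)$ has a unique coclosure in $A$ for every coclosed $b\in B$; and (c) $\mathcal{A}=\mathcal{C}_A$, i.e.\ $\alpha(a)$ is coclosed in $B$ for every coclosed $a\in A$. Once these hold, Theorem~\ref{t:main}(ii) gives the bijections.

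For (a): under hypothesis (i), every element of $A$ being Galois gives $\beta\alpha(a)=a$, which is trivially cosmall in $[a,1]$, and if $a$ is coclosed then any coclosure $y$ of $\beta\alpha(a)=a$ satisfies $y=a$ because $a$ is coclosed and cosmall in $[y,1]$; so $a$ is the unique coclosure and $(\alpha,\beta)$ is UCC. Under hypothesis (ii) this is assumed outright; and under the order-preserving hypothesis ``$A$ amply supplemented modular UCC and $(\alpha,\beta)$ cosmall'', a coclosed $a\in A$ is a coclosure of $\beta\alpha(a)$ by cosmallness, hence \emph{the} coclosure since $A$ is UCC, so $(\alpha,\beta)$ is again UCC cosmall. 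For (b): a coclosure of $\beta(b)$ exists --- under (i) because $\beta(b)$ is itself coclosed in $A$ (if $\beta(b)$ is cosmall in $[a,1]$, Lemma~\ref{l:transfer} and $\alpha\beta(b)=b$ from Lemma~\ref{l:cocgal} give $b$ cosmall in $[\alpha(a),1]$, forcing $\alpha(a)=b$, and applying $\beta$ yields $a=\beta(b)$ as $a$ is Galois), and under (ii) because $A$ is amply supplemented modular, by Lemma~\ref{l:suppl}(iii). Uniqueness is then automatic in both cases: any coclosure $a_0$ of $\beta(b)$ is coclosed and, by Lemma~\ref{l:transfer} together with $\alpha\beta(b)=b$, satisfies $\alpha(a_0)=b$, hence $\beta\alpha(a_0)=\beta(b)$, so the UCC property forces $a_0$ to be the unique coclosure of $\beta(b)$.

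The expected main obstacle is (c). Let $a\in A$ be coclosed and suppose $\alpha(a)$ is cosmall in $[b,1]$ with $b\le\alpha(a)$; we must show $b=\alpha(a)$. By Lemma~\ref{l:transfer}, $\beta\alpha(a)$ is cosmall in $[\beta(b),1]$. Under (i) this reads $a$ cosmall in $[\beta(b),1]$, so $\beta(b)=a$ since $a$ is coclosed, and applying $\alpha$ gives $\alpha(a)\le\alpha\beta(b)\le b\le\alpha(a)$, hence $b=\alpha(a)$. Under (ii) we take a coclosure $a_0\le\beta(b)$ of $\beta(b)$ (Lemma~\ref{l:suppl}(iii)); since $a_0\le\beta(b)\le\beta\alpha(a)$, with $\beta(b)$ cosmall in $[a_0,1]$ and $\beta\alpha(a)$ cosmall in $[\beta(b),1]$, Lemma~\ref{l:cosmall} yields that $\beta\alpha(a)$ is cosmall in $[a_0,1]$; thus $a_0$ and $a$ are both coclosures of $\beta\alpha(a)$ (the latter because $(\alpha,\beta)$ is cosmall and $a$ is coclosed), so $a_0=a$ by the UCC property, whence $a\le\beta(b)$ and again $\alpha(a)\le\alpha\beta(b)\le b\le\alpha(a)$. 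It is precisely this reconciliation of $b$ with $\alpha(a)$ through the coclosure of $\beta(b)$ inside the amply supplemented lattice $A$ that is the technical heart of the argument; the Galois-connection hypotheses $\alpha(1)=1$ and ``$\beta$ preserves finite suprema'' are used only through Lemmas~\ref{l:cocgal} and~\ref{l:transfer}.

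With (a)--(c) established, Theorem~\ref{t:main}(ii) supplies the mutually inverse bijections $\varphi\colon\mathcal{C}_A\to\mathcal{C}_B$, $\varphi(a)=\alpha(a)$, and $\psi\colon\mathcal{C}_B\to\mathcal{C}_A$, $\psi(b)=\overline{\beta(b)}$. For the order-preserving claim, $\varphi=\alpha|_{\mathcal{C}_A}$ is order-preserving because $\alpha$ is; and $\psi$ is order-preserving because under (i) we have $\psi(b)=\overline{\beta(b)}=\beta(b)$ (as $\beta(b)$ is coclosed by (b)), while under the hypothesis ``$A$ amply supplemented modular UCC'' the inequality $b_1\le b_2$ gives $\beta(b_1)\le\beta(b_2)$ and hence $\overline{\beta(b_1)}\le\overline{\beta(b_2)}$ by Lemma~\ref{l:suppl}(iv).
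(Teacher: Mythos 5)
Your proposal is correct and follows essentially the same route as the paper: reduce to Theorem~\ref{t:main}(ii) by showing $\mathcal{A}=\mathcal{C}_A$ and $\mathcal{B}=\mathcal{C}_B$ via Lemmas~\ref{l:cosmall}, \ref{l:cocgal}, \ref{l:transfer} and \ref{l:suppl}, with the same uniqueness-of-coclosure arguments and the same treatment of the order-preserving clause. The only (welcome) difference is that you make explicit the small verification that ``$A$ UCC and $(\alpha,\beta)$ cosmall'' implies ``$(\alpha,\beta)$ UCC cosmall,'' which the paper leaves implicit.
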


\begin{proof} In both cases we use the notation from Theorem \ref{t:main}, and prove that $\mathcal{A}=\mathcal{C}_A$
and $\mathcal{B}=\mathcal{C}_B$. Then by Theorem \ref{t:main} the required mutually inverse bijections will be given by
$\varphi$ and $\psi$. Clearly, $\mathcal{A}\subseteq \mathcal{C}_A$ and $\mathcal{B}\subseteq \mathcal{C}_B$. 

(i) Assume that every element in $A$ is Galois. Then $(\alpha,\beta)$ is clearly UCC cosmall. 

Let $a\in \mathcal{C}_A$. By hypothesis, we have $a=\beta\alpha(a)$. We claim
that $\alpha(a)$ is coclosed in $B$. To this end, let $b'\in B$ be such that $\alpha(a)$ is cosmall in $[b',1]$. By
Lemma \ref{l:transfer}, $a=\beta\alpha(a)$ is cosmall in $[\beta(b'),1]$. Since $a$ is coclosed in $A$, we must have
$\beta(b')=a$. It follows that $\alpha(a)=\alpha\beta(b')\leq b'$, whence $b'=\alpha(a)$. Thus $\alpha(a)$ is
coclosed in $B$, and so $a\in \mathcal{A}$.   

Now let $b\in \mathcal{C}_B$. We claim that $\beta(b)$ is coclosed in $A$. To this end, let $a'\in
A$ be such that $\beta(b)$ is cosmall in $[a',1]$. By Lemma \ref{l:transfer}, $\alpha\beta(b)$ is cosmall in
$[\alpha(a'),1]$. By Lemma \ref{l:cocgal}, every coclosed element in $B$ is a Galois element, hence
$b=\alpha\beta(b)$, and so $b$ is cosmall in $[\alpha(a'),1]$. Since $b$ is coclosed in $B$, it follows that
$\alpha(a')=b$. Then by hypothesis we have $a'=\beta\alpha(a')=\beta(b)$. Thus $\beta(b)$ is coclosed in $A$, and so
$b\in \mathcal{B}$. 

It follows that $\mathcal{A}=\mathcal{C}_A$ and $\mathcal{B}=\mathcal{C}_B$. By Theorem \ref{t:main}, the maps
$\varphi=\alpha$ and $\psi=\beta$ are mutually inverse bijections between $\mathcal{C}_A$ and $\mathcal{C}_B$. 

(ii) Assume that $A$ is amply supplemented modular and $(\alpha,\beta)$ is UCC cosmall.  

Let $a\in \mathcal{C}_A$. We claim that $\alpha(a)$ is coclosed in $B$. To this end, let $b'\in B$ be such that
$\alpha(a)$ is cosmall in $[b',1]$. By Lemma \ref{l:transfer}, $\beta\alpha(a)$ is cosmall in $[\beta(b'),1]$.
Since $A$ is amply supplemented modular, Lemma \ref{l:suppl} yields a coclosure of $\beta(b')$ in $A$, say $a_0$. Then
$\beta(b')$ is cosmall in $[a_0,1]$, whence $\beta\alpha(a)$ is cosmall in $[a_0,1]$ by Lemma \ref{l:cosmall}. Hence
$a_0$ is a coclosure of $\beta\alpha(a)$ in $A$. Since $(\alpha,\beta)$ is UCC cosmall, $a$ is the unique coclosure of
$\beta\alpha(a)$ in $A$, hence $a_0=a$. Then $a\leq \beta(b')$, which implies $\alpha(a)\leq b'$ because
$(\alpha,\beta)$ is a Galois connection. Hence $\alpha(a)=b'$, and so $\alpha(a)$ is coclosed in $B$. Thus $a\in
\mathcal{A}$.

Now let $b\in \mathcal{C}_B$. We claim that $\beta(b)$ has a unique coclosure in $A$. The existence of a coclosure of
$\beta(b)$ in $A$ follows by Lemma \ref{l:suppl}, because $A$ is amply supplemented modular. Now assume that $a_1$ and
$a_2$ are two coclosures of $\beta(b)$ in $A$. Then $\beta(b)$ is coclosed both in $[a_1,1]$ and in $[a_2,1]$. By
Lemma \ref{l:transfer}, $\alpha\beta(b)$ is cosmall both in $[\alpha(a_1),1]$ and in $[\alpha(a_2),1]$. Since $b$ is
coclosed, we have $\alpha\beta(b)=b$ by Lemma \ref{l:cocgal}. Hence $b$ is cosmall both in $[\alpha(a_1),1]$ and in
$[\alpha(a_2),1]$. But $b$ is coclosed in $B$, hence we must have $\alpha(a_1)=\alpha(a_2)=b$. Since
$(\alpha,\beta)$ is a cosmall Galois connection, $\beta\alpha(a_1)=\beta\alpha(a_2)$ is cosmall in $[a_1,1]$. Then
$a_1$ is a coclosure of $\beta\alpha(a_2)$ in $A$. Since $(\alpha,\beta)$ is UCC cosmall, $a_2$ is the unique coclosure
of $\beta\alpha(a_2)$ in $A$, hence $a_1=a_2$. Then $\beta(b)$ has a unique coclosure in $A$. Thus $b\in \mathcal{B}$. 

It follows that $\mathcal{A}=\mathcal{C}_A$ and $\mathcal{B}=\mathcal{C}_B$. By Theorem \ref{t:main}, the maps
$\varphi$ and $\psi$ are mutually inverse bijections between $\mathcal{C}_A$ and $\mathcal{C}_B$. 

Finally, if every element in $A$ is Galois, then $\varphi$ and $\psi$ are order-preserving, because so are $\alpha$ and
$\beta$. Also, if $A$ is amply supplemented modular UCC and $(\alpha,\beta)$ is cosmall, then $\varphi$ and $\psi$
are order-preserving by hypothesis and Lemma \ref{l:suppl}.  
\end{proof}

\begin{cor} \label{c:bij} Let $(A,\wedge,\vee,0,1)$ and $(B,\wedge,\vee,0,1)$ be bounded lattices and 
$(\alpha,\beta)$ a UCC cosmall Galois connection, where $\alpha:A\to B$, $\alpha(1)=1$, $\beta:B\to A$ preserves
finite suprema, and $A$ is amply supplemented modular. Then the following are equivalent:

(i) The mutually inverse bijections from Theorem \ref{t:main2} are the restrictions of $\alpha$ and $\beta$ to
the sets $\mathcal{C}_A$ of coclosed elements in $A$ and $\mathcal{C}_B$ of coclosed elements in $B$ respectively.

(ii) Every coclosed element in $A$ is Galois.
\end{cor}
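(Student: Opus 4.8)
The plan is to unwind the explicit description of the bijections furnished by Theorem \ref{t:main2} and then reduce the whole equivalence to one short computation with the Galois connection. Under the hypotheses of the corollary we are exactly in case (ii) of Theorem \ref{t:main2}, and the proof of that theorem (which invokes Theorem \ref{t:main}(ii)) shows that $\mathcal{C}_A=\mathcal{A}$, $\mathcal{C}_B=\mathcal{B}$, and that the mutually inverse bijections are $\varphi(a)=\alpha(a)$ for $a\in\mathcal{C}_A$ and $\psi(b)=\overline{\beta(b)}$ for $b\in\mathcal{C}_B$. So I would observe first that $\varphi$ is \emph{already} the restriction of $\alpha$ to $\mathcal{C}_A$; hence the content of condition (i) is precisely that $\psi$ be the restriction of $\beta$ to $\mathcal{C}_B$, equivalently, that $\beta(b)$ be itself coclosed in $A$ (so that $\overline{\beta(b)}=\beta(b)$) for every coclosed $b\in B$. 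Stating this reduction cleanly at the outset makes both implications essentially immediate.

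For the implication (i)$\Rightarrow$(ii): given a coclosed $a\in A$, note that $\varphi(a)=\alpha(a)$ lies in $\mathcal{C}_B$, so by (i) we have $\psi(\alpha(a))=\beta(\alpha(a))$; since $\varphi$ and $\psi$ are mutually inverse, $\beta\alpha(a)=\psi\varphi(a)=a$, which is exactly the assertion that $a$ is a Galois element.

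For the implication (ii)$\Rightarrow$(i): given a coclosed $b\in B$, the identity $\varphi\psi(b)=b$ from the proof of Theorem \ref{t:main}(ii) reads $\alpha(\overline{\beta(b)})=b$. The element $\overline{\beta(b)}$ is coclosed in $A$ by definition of a coclosure, hence Galois by (ii), so $\beta\alpha(\overline{\beta(b)})=\overline{\beta(b)}$; applying $\beta$ to $\alpha(\overline{\beta(b)})=b$ then gives $\overline{\beta(b)}=\beta(b)$. Thus $\beta(b)$ is coclosed and $\psi(b)=\beta(b)$, so the bijections are the claimed restrictions of $\alpha$ and $\beta$.

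The argument is very short, and I do not expect a genuine obstacle; the only place requiring care is the bookkeeping with the preceding results. One must be sure that in case (ii) of Theorem \ref{t:main2} the left-hand bijection $\varphi$ is literally $\alpha$ and not $\overline{\alpha(\cdot)}$ — this is what turns (i) into a condition on $\beta$ alone — and one must correctly combine the mutual-inverse identities $\psi\varphi=\mathrm{id}_{\mathcal{C}_A}$ and $\varphi\psi=\mathrm{id}_{\mathcal{C}_B}$ with the fact that $\overline{\beta(b)}$, being coclosed, is Galois under hypothesis (ii). Everything else is routine manipulation of the Galois relation $\alpha(x)\le y\Leftrightarrow x\le\beta(y)$.
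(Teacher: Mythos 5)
Your proposal is correct and follows essentially the same route as the paper: both reduce condition (i) to the statement that $\overline{\beta(b)}=\beta(b)$ for every coclosed $b\in B$ (since $\varphi$ is literally $\alpha$ in case (ii) of Theorem \ref{t:main2}), then derive (ii) from the identity $\psi\varphi(a)=\overline{\beta\alpha(a)}=a$, and conversely use that $\overline{\beta(b)}$ is coclosed, hence Galois, together with $\alpha(\overline{\beta(b)})=b$ to conclude $\overline{\beta(b)}=\beta(b)$. No gaps.
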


\begin{proof} Note that (i) is equivalent to the condition: $\overline{\beta(b)}=\beta(b)$ for every
coclosed element $b\in B$. 

First assume (i). Let $a\in A$ be a coclosed element. By Theorems \ref{t:main} and \ref{t:main2} (ii) we have
$a=\overline{\beta\alpha(a)}$. By hypothesis we have $\overline{\beta\alpha(a)}=\beta\alpha(a)$. Hence 
$a=\beta\alpha(a)$, and so $a$ is Galois.

Conversely, assume (ii). Let $b\in B$ be a coclosed element. By hypothesis the coclosed element $\overline{\beta(b)}\in
A$ is Galois, hence $\overline{\beta(b)}=\beta\alpha(\overline{\beta(b)})$. By Theorems \ref{t:main}
and \ref{t:main2} (ii) we have $\alpha(\overline{\beta(b)})=b$. Hence $\overline{\beta(b)}=\beta(b)$.
\end{proof}

\begin{rem} \rm (1) Under the hypotheses of Theorem \ref{t:main2}, every coclosed element in $B$ is Galois by Lemma
\ref{l:cocgal}. In case every element in $A$ is Galois, Theorem \ref{t:main2} establishes in fact a bijection between
the sets of coclosed Galois elements in $A$ and $B$. We point out that this is not true in general. Indeed,
let $(\alpha,\beta)$ be the UCC cosmall Galois connection from Example \ref{e:ex} (1). Then $\alpha(G)=G$
and $\beta$ preserves finite suprema. The coclosed Galois elements in the domain $A=S(G)$ of $\alpha$ are $H_3$ and $G$,
whereas the coclosed Galois elements in the codomain $B=S(G)$ of $\alpha$ are $0$, $H_2$, $H_3$ and $G$. Hence there is
no bijection between the two sets of coclosed Galois elements. Note that there are elements in $A=S(G)$ which are not
Galois, for instance $H_2$. 

(2) Consider again the UCC cosmall Galois connection from Example \ref{e:ex} (1). Then $\alpha(G)=G$ and $\beta$
preserves finite suprema. Also, $A=S(G)$ is amply supplemented modular, as any subgroup lattice of a finite group. But
condition (i) in Corollary \ref{c:bij} does not hold, because for instance $H_3$ is coclosed in $S(G)$, but
$\beta(H_3)=H_4$ is not coclosed in $S(G)$. Obviously, neither condition (ii) in Corollary \ref{c:bij} holds, because
for instance $H_2$ is a coclosed element in $A=S(G)$ which is not Galois. 
\end{rem}

\section{Applications}

In this section we apply the above results to submodule lattices of suitable modules, which are clearly bounded modular
lattices. Let us first identify some Galois connection between submodule lattices. 

For modules $M$ and $X$, we denote by $\Gen(M)$ the set of $M$-generated modules, and by ${\rm
Tr}_M(X)=M\Hom_R(M,X)$ the greatest submodule of $X$ which is $M$-generated. 

Let $M$ and $N$ be left $R$-modules, and let $S=\End_R(M)$. Consider the following functions between the complete
modular lattices $\mathcal{S}_R(N)$ and $\mathcal{S}_S(\Hom_R(M,N))$ of left $R$-submodules of $N$ and left
$S$-submodules of $\Hom_R(M,N)$ respectively:
\begin{align*}  &\alpha:\mathcal{S}_S(\Hom_R(M,N))\to \mathcal{S}_R(N), \quad \alpha(Y)=MY, \quad \\
&\beta:\mathcal{S}_R(N)\to \mathcal{S}_S(\Hom_R(M,N)),\quad \beta(X)=\Hom_R(M,X). \end{align*} Then $(\alpha,\beta)$ is
a Galois connection \cite[Proposition~3.4]{AN}. The coclosed elements in $\mathcal{S}_R(N)$ are exactly the coclosed
submodules of $N$. A submodule $X$ of $N$ is a Galois element in $\mathcal{S}_R(N)$ if and only if $X\in \Gen(M)$
\cite[Proposition~3.5]{AN}. Now assume that $M$ is finitely generated quasi-projective and $N\in \Gen(M)$. Then
$\alpha(\Hom_R(M,N))=N$, $\beta$ preserves finite suprema and $Y=\Hom_R(M,MY)=\beta\alpha(Y)$ for every submodule $Y$ of
$\Hom_R(M,N)$ by \cite[Proposition~4.9]{AN} or \cite[18.4]{Wis}. Hence every submodule of $\Hom_R(M,N)$ is Galois. 

Let $\sigma[M]$ be the full subcategory of the category of left $R$-modules consisting of all submodules of
$M$-generated modules. Recall that $N\in \sigma[M]$ is called \emph{$M$-faithful} if for every $0\neq g\in \Hom_R(X,N)$
with $X\in \sigma[M]$, $\Hom_R(M,X)g\neq 0$. Note that if $N$ is $M$-faithful, then the above function $\beta$ is
clearly injective. When applied to the bounded modular submodule lattice of a module, Definition \ref{d:lifting} yields
the notions of supplemented and lifting modules. Now Corollary \ref{c:dualg} and Theorem
\ref{t:lifting} give the following corollary, whose first two properties are given in \cite[Theorem~4.2]{GG} and
\cite[43.7]{Wis} respectively. 

\begin{cor} Let $M$ be a finitely generated quasi-projective left $R$-module with $S=\End_R(M)$ and $N\in \Gen(M)$.
Then:

(i) The left $R$-module $N$ and the left $S$-module $\Hom_R(M,N)$ have the same dual Goldie dimension.
 
(ii) $N$ is supplemented if and only if $\Hom_R(M,N)$ is supplemented. 

(iii) If $N$ is lifting, then $\Hom_R(M,N)$ is lifting. If $N$ is $M$-faithful and $\Hom_R(M,N)$ is
lifting, then $N$ is lifting.
\end{cor}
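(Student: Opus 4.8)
The plan is to verify the three hypotheses of the relevant abstract theorems for the Galois connection $(\alpha,\beta)$ between $\mathcal{S}_R(N)$ and $\mathcal{S}_S(\Hom_R(M,N))$, and then read off the conclusions. Concretely, set $A=\mathcal{S}_S(\Hom_R(M,N))$ and $B=\mathcal{S}_R(N)$, so that $\alpha:A\to B$, $\beta:B\to A$. The preamble of this section already records everything we need: $(\alpha,\beta)$ is a Galois connection by \cite[Proposition~3.4]{AN}; since $M$ is finitely generated quasi-projective and $N\in\Gen(M)$, we have $\alpha(1)=\alpha(\Hom_R(M,N))=N=1$, $\beta$ preserves finite suprema, and $\beta\alpha(Y)=Y$ for every $Y\in A$, i.e.\ \emph{every element of $A$ is Galois}. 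Both $A$ and $B$ are complete modular lattices, hence in particular bounded modular lattices.

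With this in hand, parts (i) and (ii) are immediate: they are exactly the instances of Corollary~\ref{c:dualg} and Theorem~\ref{t:lifting} corresponding to the case ``every element of $A$ is Galois''. For (i), Corollary~\ref{c:dualg}(ii) gives ${\rm hdim}(A)={\rm hdim}(B)$, which translates to the statement that $\Hom_R(M,N)$ and $N$ have the same dual Goldie dimension. For (ii), Theorem~\ref{t:lifting}(i) gives that $A$ is supplemented iff $B$ is supplemented, i.e.\ $\Hom_R(M,N)$ is supplemented iff $N$ is supplemented — note that, because of the direction of $\alpha$, the roles of ``the module $N$'' and ``the module $\Hom_R(M,N)$'' are swapped relative to the lattices $A$ and $B$, but the biconditional is symmetric so this is harmless.

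For (iii) I would apply Theorem~\ref{t:lifting}(ii). Since every element of $A$ is Galois, that theorem says: if $B$ is lifting then $A$ is lifting (this is the implication ``$N$ lifting $\Rightarrow$ $\Hom_R(M,N)$ lifting''), and conversely, \emph{provided $\beta$ is injective}, if $A$ is lifting then $B$ is lifting (``$\Hom_R(M,N)$ lifting $\Rightarrow$ $N$ lifting''). The one extra ingredient is the injectivity of $\beta$: as noted just before the statement, if $N$ is $M$-faithful then $\beta=\Hom_R(M,-)$ is injective on $\mathcal{S}_R(N)$. This is why the $M$-faithfulness hypothesis appears only in the second half of (iii).

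The only genuine ``work'' here is bookkeeping: matching the abstract lattice hypotheses to the concrete module situation and keeping straight that $A$ is the $S$-submodule lattice of $\Hom_R(M,N)$ while $B$ is the $R$-submodule lattice of $N$, so that ``$A$ Galois-full'' is the substantive input supplied by finite generation plus quasi-projectivity. I do not anticipate a real obstacle, since all the lattice-theoretic content has been isolated in Corollary~\ref{c:dualg} and Theorem~\ref{t:lifting}; the mild care needed is simply to confirm that $\alpha(1)=1$ and that $\beta$ preserves finite suprema in this setting, both of which are cited from \cite[Proposition~4.9]{AN} (equivalently \cite[18.4]{Wis}), and to invoke $M$-faithfulness exactly where injectivity of $\beta$ is required.
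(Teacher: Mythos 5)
Your proposal is correct and matches the paper's own (essentially one-line) proof: identify $A=\mathcal{S}_S(\Hom_R(M,N))$, $B=\mathcal{S}_R(N)$, verify $\alpha(1)=1$, that $\beta$ preserves finite suprema and that every element of $A$ is Galois (whence the connection is automatically cosmall, as Corollary~\ref{c:dualg} requires), and then quote Corollary~\ref{c:dualg}(ii) and Theorem~\ref{t:lifting}, with $M$-faithfulness supplying the injectivity of $\beta$ needed for the second half of (iii).
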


Finally, Theorem \ref{t:main2} yields the following consequence. 

\begin{cor} \label{c:main} Let $M$ be a finitely generated quasi-projective left $R$-module with $S=\End_R(M)$, 
$N\in \Gen(M)$, and let $\alpha$, $\beta$ be the above functions. Then $\alpha$ and $\beta$ restrict to
order-preserving mutually inverse bijections between the sets of coclosed left $S$-submodules of $\Hom_R(M,N)$ and
coclosed left $R$-submodules of $N$.
\end{cor}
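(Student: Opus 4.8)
The plan is to derive Corollary~\ref{c:main} as a direct application of Theorem~\ref{t:main2}(i) to the specific Galois connection $(\alpha,\beta)$ between the submodule lattices $\mathcal{S}_S(\Hom_R(M,N))$ and $\mathcal{S}_R(N)$. Note that here the \emph{domain} of $\alpha$ is $A=\mathcal{S}_S(\Hom_R(M,N))$ and the \emph{codomain} is $B=\mathcal{S}_R(N)$, which matches the direction in which the hypotheses of Theorem~\ref{t:main2} are phrased. First I would verify, using the discussion preceding the statement, that all the hypotheses of Theorem~\ref{t:main2} are met: the lattices $\mathcal{S}_S(\Hom_R(M,N))$ and $\mathcal{S}_R(N)$ are bounded modular (being submodule lattices); the pair $(\alpha,\beta)$ with $\alpha(Y)=MY$ and $\beta(X)=\Hom_R(M,X)$ is a Galois connection by \cite[Proposition~3.4]{AN}; and, crucially, since $M$ is finitely generated quasi-projective and $N\in\Gen(M)$, we have $\alpha(\Hom_R(M,N))=N$ (so $\alpha(1)=1$), $\beta$ preserves finite suprema, and $\beta\alpha(Y)=\Hom_R(M,MY)=Y$ for every submodule $Y$ of $\Hom_R(M,N)$, i.e.\ every element of $A$ is Galois, by \cite[Proposition~4.9]{AN} or \cite[18.4]{Wis}.

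Having checked these, I would invoke Theorem~\ref{t:main2}(i): condition (i) there (``every element in $A$ is Galois'') holds, so $\alpha$ and $\beta$ restrict to mutually inverse bijections between the set $\mathcal{C}_A$ of coclosed elements of $\mathcal{S}_S(\Hom_R(M,N))$ and the set $\mathcal{C}_B$ of coclosed elements of $\mathcal{S}_R(N)$; moreover, since every element of $A$ is Galois, the last clause of Theorem~\ref{t:main2} guarantees these bijections are order-preserving. The only remaining point is a translation of vocabulary: a coclosed \emph{element} of the lattice $\mathcal{S}_R(N)$ is exactly a coclosed \emph{submodule} of $N$ in the module-theoretic sense (and likewise for $\Hom_R(M,N)$ as a left $S$-module), since the lattice-theoretic definitions of cosmall and coclosed in Section~2 were deliberately set up to agree with the module-theoretic ones of \cite{CLVW}. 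This identifies $\mathcal{C}_A$ with the set of coclosed left $S$-submodules of $\Hom_R(M,N)$ and $\mathcal{C}_B$ with the set of coclosed left $R$-submodules of $N$, which is precisely the assertion of the corollary.

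I do not expect any genuine obstacle here, since essentially all the work has been front-loaded into Theorem~\ref{t:main2} and into the cited results of Albu--N\u ast\u asescu; the proof is a verification that this particular $(\alpha,\beta)$ lands in case (i) of that theorem. If there is a subtle point, it is only to be careful that the roles of $A$ and $B$ are assigned correctly — it is the module $\Hom_R(M,N)$ whose submodule lattice plays the role of $A$ (the one all of whose elements are Galois), not $N$ — and to note explicitly that ``$\alpha$ and $\beta$ restrict to \ldots bijections'' is legitimate precisely because, in case (i) of Theorem~\ref{t:main2}, the maps $\varphi,\psi$ of Theorem~\ref{t:main} are literally $\alpha$ and $\beta$ themselves (no passage to coclosures is needed, as $\mathcal{A}=\mathcal{C}_A$ and $\alpha$ sends coclosed elements to coclosed elements). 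Thus the proof is a short paragraph citing Theorem~\ref{t:main2}(i) after the hypothesis check.
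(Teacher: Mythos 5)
Your proposal is correct and follows exactly the route the paper intends: the paragraphs preceding the corollary establish that $(\alpha,\beta)$ is a Galois connection with $A=\mathcal{S}_S(\Hom_R(M,N))$, that $\alpha(1)=1$, that $\beta$ preserves finite suprema, and that every element of $A$ is Galois, after which Theorem~\ref{t:main2}(i) (including its final order-preservation clause) gives the statement verbatim. Your attention to the direction of the connection (that it is the $S$-submodule lattice of $\Hom_R(M,N)$, not that of $N$, which plays the role of $A$) is precisely the one point worth being careful about, and you got it right.
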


\begin{rem} \rm Our theory of cosmall Galois connections is clearly dualizable to one of some naturally defined
essential Galois connections. As a consequence, one may obtain a lattice-theoretic version of Zelmanowitz's results from
\cite{Z}.
\end{rem}

\end{document}